\numberwithin{equation}{section}
\newtheorem{theo}{Theorem}[section]
\newtheorem{lem}{Lemma}[section]
\newtheorem{rem}{Remark}[section]
\newtheorem{defi}{Definition}[section]
\newtheorem{propo}{Proposition}[section]
\newcommand{\R}        {{{\rm I\! R}}}
\newcommand{\E}        {{{\rm I\! E}}}
\newcommand{\N}        {{{\rm I\! N}}}
\newcommand{\Q}        {{{\rm I\! Q}}}
\renewcommand{\P}        {{ {\rm I \hskip -2pt P}}}
\begin{document}
	\title{Asymptotic behavior for delayed backward stochastic differential equations\thanks{This work is partially supported by Emerging Regional center of excellence ERCE 2021-2025 of "Laboratoire de Mathématiques et Applications}}
	
	\author{Clément Manga $^{a}$ \thanks{cmaga@univ-zg.sn}\;\; Auguste Aman $^{b}$ \thanks{aman.auguste@ufhb.edu.ci, corresponding author}\;\; 
	 and\; Navègue Tuo $^{c}$\thanks{tnavegue@yahoo.fr} \\ 
		a. UFR Sciences et Technologie, Université Assane Seck, Ziguinchor, Sénégal\;\;\;\;\;\;\;\;\;\\
		b.	UFR Mathématiques et Informatique, Université Félix H. Boigny, Abidjan,\;\;\;\;\;\;\;\\ Côte d'Ivoire\\
		c. UFR Sciences Economiques et Gestions, Université Alassane Ouattara, Bouaké,\\ Côte d'Ivoire}
	
	\date{}
	\maketitle\thispagestyle{empty} \setcounter{page}{1}
	
	\thispagestyle{fancy} \fancyhead{}
	\fancyfoot{}
	\renewcommand{\headrulewidth}{0pt}
	\begin{abstract}
This paper is devoted to study the asymptotic properties for the solution of decoupled forward backward stochastic differential equations with delayed generator. As an application, we establish a large deviation principe for solution of the backward equation.
	\end{abstract}

	\vspace{.08in}\textbf{MSC}: 34F05; 60H10; 60F10; 60H30,60G07 \\
	\vspace{.08in}\textbf{Keywords}: Forward stochastic differential equations; Backward Stochastic differential equations; Delayed generators; Large deviation principe, Meyer-Zheng topologie.
	
	\section{Introduction}
	
The theory of large deviations concerns the asymptotic behaviour of remote tails of sequences probability distributions. While some basic ideas of the theory can be traced to Laplace theorie, the formalization started with insurance mathematics, namely ruin theory with Cramèr \cite{Cramer}. A unified formalization of large deviation theory was developed by Varadhan \cite{Varadhan}, Donsker and Varadhan \cite{DV1,DV2,DV3}. This theory formalizes the heuristic ideas of concentration of measures and widely generalizes the notion of convergence of probability measures.
Subsequently several issues concerning large deviation principles and their applications to stochastic differential equations (SDEs, for short) were studied in many works. We can  cite for e.g. Freidlin and Wentzell \cite{FW}, Stroock \cite{St}, Deuschel and Stroock \cite{DS}, Den Hollander \cite{DH}, Diédhiou and Manga \cite{DM}, and others. Next, with probability methods, by the contraction principle, the same small random perturbation for BSDEs and the Freidlin-Wentzell's large deviation estimates in $\mathcal{C}([0, T ], \R^n)$ are also obtained by \cite{Essay, FR, Sophi, KM} and  references therein.

On the other hand, there is little work on the large deviations theory for delayed SDE. The first one was du by M. Scheutzow \cite{S} within the context of
additive white noise. The multiplicative case with particular dependence on the history have be done by Mohammed and Zhang in \cite{MZ}. Recently, Manga and Aman (see \cite{Manga-AMAN}) established the large deviation principle in for delayed SDE in the general case.

However, to our knowledge the there exist no work in literature concerning the large deviation principle for the delayed BSDEs introduced in the last decay.

Indeed, in their works \cite{DI1, DI2}, Delong and Imkeller introduce two equations with dynamics given, for $t\in [0, T]$, respectively by
\begin{eqnarray}
Y(t)&=& \xi+\int^{T}_{t}f(s,Y_s,Z_s)ds-\int_{t}^{T}Z(s)dW(s)\label{Eq2}
\end{eqnarray}
and
\begin{eqnarray}
Y(t)&=& \xi+\int^{T}_{t}f(s,Y_s,Z_s,U_s)ds-\int_{t}^{T}Z(s)dW(s)-\int_{t}^{T}U(s,z)\widetilde{M}(ds,dz).\label{Eq21}
\end{eqnarray}
This type of equations has been called BSDE with time delayed generator. It is driven respectively by a Brownian motion and  a Lévy process, the components of which are given by a Brownian motion and a Poisson random measure. In this new type of equations, a generator $f$ at time $s$ depends in some measurable way on the past values of a solutions $(Y_s, Z_s) = (Y(s + u), Z(s + u))_{-T\leq u\leq 0}$ and $(Y_s, Z_s,U_s) = (Y(s + u), Z(s + u),U(s+u))_{-T\leq u\leq 0}$ respectively. In this two papers, existence and uniqueness results have been obtained if only if the Lipschitz constant or terminal time are small enough. Further, several solution properties are investigated, including the comparison principle, measure solutions, the inheritance property of boundedness from terminal condition to solution, as well as the BMO martingale property for the control component. In addition, in \cite{DI2}, author was studied the Malliavin's differentiability of the solution of a time delayed BSDE, both with respect to the continuous component of the Lévy process, which coincides with the classical Malliavin derivative for Hilbert-valued random variables, as well as with respect to the pure jump part, in terms of an increment quotient operator related to Picard's difference operator. they proved also that the well-known connection between $(Z,U)$ and the Malliavin trace of $Y$ still holds in the case of time delayed generators. 

In addition to its theoretical importance, we would like to point out that the interest for BSDEs with time delayed generators also lies in their applications in reinsurance and finance theory (see \cite{D1}). 
More precisely, let us consider the financial market which consists of two tradeable instruments. A risk-free
asset whose price $B = (B(t), 0 \leq t \leq T)$ is given by equation
\begin{eqnarray*}
\frac{dB(t)}{B(t)}=r(t) dt, \ \ B(0)= 1,
\end{eqnarray*}
where $r = (r(t), 0 \leq t \leq T)$ denotes the risk-free interest rate in the market. The price $D = (D(t), 0 \leq  t \leq  T)$ of a risky bond with maturity T is given by
\begin{eqnarray*}
\frac{dB(t)}{B(t)}= (r(t) + \sigma(t)\theta(t)) dt + \sigma(t) dW(t), \ \ D(0) = d_0.
\end{eqnarray*}
Setting $\mu(t) = r(t) + \sigma(t)\theta (t)$, then the investment portfolio $X = (X(t), 0 \leq t \leq T)$ and an amount $\pi = (\pi (t), 0 \leq t \leq T)$ invested in the bond $D$, satisfy the equation
\begin{eqnarray*}
dX(t) = \pi(t)(\mu(t) dt + \sigma(t) dW(t)) + (X(t)- \pi(t))r(t) dt, \ \ X(0) = x.
\end{eqnarray*}
By the change of variables (actualization principle), we set, for  $0 \leq t \leq T$,  $Y(t) = X(t) e^{-\int_0^t r(s) ds}$ and $Z(t)= \sigma (t) \pi(t) e^{-\int_0^t r(s) ds}$. Therefore, the discounted portfolio process  $Y = (Y(t), 0 \leq t \leq T)$ satisfies under the measure $\Q$ this dynamic:
\begin{eqnarray*}
	dY(t) = Z dW^{\Q}(t), \ \ Y(0) = y,
\end{eqnarray*}
where $W^{\Q}$ is a $\Q$-Brownian motion. Consider a terminal liability or an investment target $\xi$. We deal with the
problem of finding an investment strategy $Z$ and an investment portfolio $Y$ which replicate a liability or meet
a target $\xi(Y(T) , Z(T) )$ depending on the applied strategy or the past values of the portfolio. Our financial problem
is equivalent to deriving a solution $(Y, Z)$ to the time-delayed BSDE
\begin{eqnarray*}
	Y(t) = \xi(Y_T , Z_T) - \int_t^T Z dW^{\Q}(t)
\end{eqnarray*}
 which follows immediately from (1.2). For example, the dynamic of option-based portfolio assurance is the following time-delayed BSDE:
 \begin{eqnarray*}
Y(t)= Y(0) + (Y(T) - Y(0))^+ - \int_t^T Z dW^{\Q}(t). 	
 \end{eqnarray*}
For more details, the reader is encouraged to consult Delong \cite{D1} and El Karoui, Jeanblanc and Lacoste \cite{KJL}.

Let now consider this system of disturbed decoupled delayed forward backward stochastic differential equations (FBSDEs, in short)
\begin{eqnarray}\label{Eq3}
X^{\varepsilon}(t)&=& x+\int^{t}_{0}b(s,X^{\varepsilon}_s)ds+\sqrt{\varepsilon}\int_{0}^{t}\sigma(s,X^{\varepsilon}_s)dW(s)\nonumber\\\\
Y^{\varepsilon}(t)&=& g(X^{\varepsilon}_T)+\int^{T}_{t}f(s,X^{\varepsilon}_s,Y^{\varepsilon}_s,Z^{\varepsilon}_s)ds-\int_{t}^{T}Z^{\varepsilon}(s)dW(s).	\nonumber
\end{eqnarray}
We know, thanks to the work of Manga and Aman \cite{Manga-AMAN}, that the solution of the forward equation converges in probability, when $\varepsilon$ tends to $0$, to the solution $\mathcal{X}$ of the deterministic equation
\begin{eqnarray*}
	\mathcal{X}(t)&=& x+\int^{t}_{0}b(s,\mathcal{X}_s)ds.
\end{eqnarray*}
and satisfies a large deviation principle.

The objectif of this paper is to show that the solution $(Y^{\varepsilon},Z^{\varepsilon})$ of the
backward equation converges, when $\varepsilon$ tends to $0$, to $(\mathcal{Y},0)$, where $\mathcal{Y}$ is the solution of the deterministic equation
\begin{eqnarray*}
	\mathcal{Y}(t)&=& g(\mathcal{X}_T)+\int^{t}_{0}f(s,\mathcal{X}_s,\mathcal{Y}_s,0)ds,
\end{eqnarray*}	
and the distribution of $Y^{\varepsilon}$ verifies a large deviation principle.	
	
The rest of the paper is organized as follows: Section 2 is dedicated to the formulation of the problem of  FBSDEs with time delayed generator, likewise preliminary results are given. Section 3 is dedicated to derive the main results.  Paper finishes with a list of references.
	
	\section{Formulation of the problem}
	For a strict positive real number $T$, let us consider $(\Omega,\mathcal{F},\P,(\mathcal{F}_t)_{0\leq t\leq T})$ a filtered probability space, where the filtration $(\mathcal{F}_t)_{0\leq t\leq T}$ is assumed to be complete, right continuous and generated by a $(W_t)_{0\leq t\leq T}$, a one-dimensional Brownian motion. In the present paper, we consider the following coupled forward-backward stochastic differential delayed equations (FBSDDEs in short): For each $T>0$ and $s\in [0,T]$.
	\begin{eqnarray}\label{Eq1}
		X^{\epsilon}(s)&=&x+\int^s_0 b(r,X_r^{\epsilon})dr+\sqrt{\epsilon}\int^s_0 \sigma(r,X_r^{\epsilon})dW(r),\nonumber\\\label{FBSDDE}\\\nonumber
		Y^{\epsilon}(s)&=& g(X_T^{\epsilon}) + \int_s^T f(r,X_r^{\epsilon }, Y_r^{\epsilon},Z_r^{\epsilon})dr - \int_s^T Z^{\epsilon }(r) d W(r)
	\end{eqnarray}
	where $X_{s}^{\epsilon}=(X^{\epsilon}(s+u))_{-T\leq u\leq 0}$, $Y_{s}^{\epsilon}=(Y^{\epsilon}(s+u))_{-T\leq u\leq 0}$ and $Z_{s}^{\epsilon}=(Z^{\epsilon}(s+u))_{-T\leq u\leq 0}$ designed respectively all the past of the processes $X^{\epsilon}, Y^{\epsilon}$ and $Z^{\epsilon}$ until $s$. Next, we extend the solution on $[-T,0]$ assuming that, $X^{\epsilon}(s)=X^{\epsilon}(0)=x, Y^{\epsilon}(s)= Y^{\epsilon}(0)$ and $Z^{\epsilon}(s)=0$, for $s\in[-T,0]$. In order to give what we mean by solution of \eqref{Eq1}, let us set the following spaces.
	\begin{description}
		\item $\bullet$ Let $L_{-T}^2 (\mathbb{R})$ denote the space of measurable functions $ z : [-T;0] \rightarrow \mathbb{R} $ satisfying
		$$   \int_{-T}^0 \mid z(t) \mid^2 dt < +\infty .
		$$
		\item $ \bullet $ Let $ L_{-T}^{\infty } (\mathbb{R} )$ denote the space of bounded, measurable functions $y: [-T,0] \rightarrow \mathbb{R} $\\
		satisfying
		$$
		\sup\limits_{-T\leq t\leq 0} \mid y(t) \mid^2 < +\infty.
		$$
		
		\item $\bullet$ Let $ L^2(\Omega,\mathcal{F}_T,\mathbb{P})$ be the space of $\mathcal{F}_T$-measurable random variables $\xi: \Omega \rightarrow \mathbb{R} $ normed by $$\|\xi\|_{L^2}^2=\E(|\xi|^2)$$.
		\item \item $\bullet$ Let $ \mathcal{S}^2(\R)$ denote the space of all predictable process $\eta$ with values in $\R$ such that $$\E\left(\sup_{0\leq s\leq T}e^{\beta s}|\eta(s)|^2\right)<+\infty.$$
		\item $\bullet$ Let $\mathcal{H}^2(\R^n)$ denote the space of all predictable process $\eta$ with values in $\R$ such that $$\E\left(\int_{0}^Te^{\beta s}|\eta(s)|^2ds\right)<+\infty.$$
	\end{description}
	\begin{defi}
		A triple of processes $(X^{\epsilon},Y^{\epsilon},Z^{\epsilon})$ is called an adapted solution of \eqref{Eq1}, if $(X,Y,Z)$ belongs to $\mathcal{H}^2(\R)\times\mathcal{S}^{2}(\R)\times \mathcal{H}^2(\R^n)$, and satisfies \eqref{Eq1} $\P$-almost surely.	
	\end{defi}
	
	Our first aims of this paper, is to derive an existence and uniqueness result for FBSDDE \eqref{FBSDDE} under the following assumptions.
	\begin{description}

		\item {(\bf A1)} $ \phi: \Omega \times [0,T]\times L_{-T}^2 (\mathbb{R}) \rightarrow \mathbb{R} $ is a product measurable and $ {\bf F} $-adapted function such that there exists a probability measure $\alpha$ defined on $([-T,0],\mathcal{B}([-T,0]))$ and a positive constant $K$  satisfying, for $\P \otimes \lambda $-a.e. $(\omega ,t) \in \Omega \times[0,T]$ and for any $ x_t, x'_t \in  L_{-T}^2 (\mathbb{R})$ 
		\begin{itemize}
			\item [(i)]
			$\displaystyle
			\mid \phi(t, x_t) - \phi(t,x'_t )\mid^2 
			\leq  K \int_{-T}^0  \mid x(t+u) - x'(t+u) \mid^2  \alpha (du)$,
			
			\item [(ii)]For $t<0,\; \; \phi(t,x_t)= 0 $,
			\item [(iii)]$\displaystyle \mathbb{E} \left[ \int_{0}^T \vert \phi(t,0) \vert^2 dt\right]  <  +\infty$, 
		\end{itemize}
		where $\phi=b,\sigma$.
		\item {(\bf A2)} $ g: \Omega \times [0,T]\times L_{-T}^2 (\mathbb{R}) \rightarrow \mathbb{R} $ is a product measurable and $ {\bf F} $-adapted function such that there exists a probability measure $\alpha$ defined on $([-T,0],\mathcal{B}([-T,0]))$ and a positive constant $K$  satisfying
		$$\displaystyle |g(x_t)- g(x'_t)|^2\leq K\int_{-T}^{0}|x(t+u)-x'(t+u)|^2\alpha(du).$$
		\item{(\bf A3)}	 $f: \Omega \times [0,T]\times L_{-T}^2 (\mathbb{R})\times  L_{-T}^{\infty} (\mathbb{R} ) \times  L_{-T}^2 (\mathbb{R}^n ) \rightarrow \mathbb{R} $  is a product measurable and $ {\bf F} $-adapted function such that there exists a probability measure $\alpha$ defined on $([-T,0],\mathcal{B}([-T,0]))$ and a positive constant $K$  satisfying for $u_t= (x_t, y_t, z_t)$ and $u'_t= (x'_t, y'_t, z'_t)$:
		\begin{itemize}
		\item [(i)]
		$\|f(u_t)- f(u'_t)|^2\leq K\int_{-T}^0|u(t+v)- u'(t+v)|^2 \alpha (dv)$,	
		\item [(ii)]For $t<0,\; \; f(t,u_t)= 0 $,
		\item [(iii)]$\displaystyle \mathbb{E} \left[ \int_{0}^T \vert f(t,\textbf{0}) \vert^2 dt\right]  <  +\infty.$ 
	\end{itemize}
		
	\end{description}
	\begin{rem}\label{R1}
		\begin{itemize}
			\item [(a)] Assumption $(\bf A1)$-$(ii)$ and $(\bf A3)$-$(ii)$  allow us to take $(X^{\epsilon}(t),Y^{\epsilon}(t),Z^{\epsilon}(t))=(X^{\epsilon}(0),Y^{\epsilon}(0),0)$ for $t<0$, as a solution of \eqref{Eq1}.
			\item [(b)] The quantity $\phi(t,0)$ in $ (\bf A1)$-$(iii) $ should be understood as a value of the generator $\phi$ at $x_t=0$.
			\item [(c)]  The quantity $f(t,\textbf{0})$ in $ (\bf A3)$-$(iii) $ should be understood as a value of the generator $f$ at $u_t=(0,0,0)$
		\end{itemize}
	\end{rem}
\begin{propo}
	Assume that (\textbf{A1} ), (\textbf{A2}) and (\textbf{A3} ) hold. For $T$ or $K$ small enough such that \[ 8K e \max (1, T) < 1,\] there exists an unique adapted solution $(X^{\epsilon}, Y^{\epsilon}, Z^{\epsilon} )$ for equation \eqref{Eq1}. 
\end{propo}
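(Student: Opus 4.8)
The proof exploits the decoupled structure of \eqref{FBSDDE}: the forward component $X^{\epsilon}$ does not depend on $(Y^{\epsilon},Z^{\epsilon})$, so the plan is to solve the forward delay equation first and then, with $X^{\epsilon}$ frozen, the delayed backward equation by a Banach fixed point argument on $\mathcal{S}^2(\R)\times\mathcal{H}^2(\R)$; the hypothesis $8Ke\max(1,T)<1$ is precisely what turns the relevant map into a contraction. For the forward equation, under (\textbf{A1}) and without any smallness assumption, a standard Picard scheme combined with the exponential-weight trick (Burkholder--Davis--Gundy applied to the stochastic integral, the Lipschitz bound (\textbf{A1})(i), Fubini, $\alpha([-T,0])=1$, and the convention $X^{\epsilon}\equiv x$ on $[-T,0]$, with the weight $\beta$ chosen large) yields a unique $X^{\epsilon}\in\mathcal{S}^2(\R)\subset\mathcal{H}^2(\R)$ with $\E\sup_{0\le s\le T}|X^{\epsilon}(s)|^2<+\infty$; this is the well-posedness already used in \cite{Manga-AMAN}.

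Now fix this $X^{\epsilon}$, and on $\mathcal{S}^2(\R)\times\mathcal{H}^2(\R)$, endowed with the norm $\|(U,V)\|^2=\E\sup_{0\le s\le T}e^{\beta s}|U(s)|^2+\E\int_0^Te^{\beta s}|V(s)|^2\,ds$ and the convention $(U,V)\equiv(U(0),0)$ on $[-T,0]$, define $\Phi(U,V)=(Y,Z)$ by
\[
Y(t)=\E\Big[\,g(X^{\epsilon}_T)+\int_t^Tf(r,X^{\epsilon}_r,U_r,V_r)\,dr\ \Big|\ \F_t\,\Big],
\]
with $Z$ supplied by the martingale representation theorem applied to the square-integrable $\F$-martingale $t\mapsto\E[\,g(X^{\epsilon}_T)+\int_0^Tf(r,X^{\epsilon}_r,U_r,V_r)\,dr\mid\F_t\,]$. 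Since the driver $r\mapsto f(r,X^{\epsilon}_r,U_r,V_r)$ no longer contains the unknown, $(Y,Z)$ solves a classical BSDE; using (\textbf{A2}), (\textbf{A3})(i) and (iii), the forward estimate, Doob's inequality and Burkholder--Davis--Gundy, one checks that $\Phi$ maps the space into itself.

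For the contraction, take $(U,V),(U',V')$, set $(Y,Z)=\Phi(U,V)$ and $(Y',Z')=\Phi(U',V')$, and write $\delta Y=Y-Y'$, $\delta Z=Z-Z'$, and likewise $\delta U,\delta V,\delta f$; since $g$ acts only on $X^{\epsilon}_T$, the terminal increment vanishes. Applying It\^o's formula to $e^{\beta t}|\delta Y(t)|^2$ and taking expectations,
\[
\E\big[e^{\beta t}|\delta Y(t)|^2\big]+\E\int_t^Te^{\beta s}\big(\beta|\delta Y(s)|^2+|\delta Z(s)|^2\big)\,ds=2\,\E\int_t^Te^{\beta s}\delta Y(s)\,\delta f(s)\,ds,
\]
where, the $X$-coordinates being equal, (\textbf{A3})(i) gives $|\delta f(s)|^2\le K\int_{-T}^0\big(|\delta U(s+v)|^2+|\delta V(s+v)|^2\big)\,\alpha(dv)$. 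Estimating the cross term by Young's inequality, interchanging the time integral with $\alpha$ by Fubini, and using that $\alpha$ is a probability measure and that $\delta U,\delta V$ vanish on $[-T,0]$, so that $\int_v^{T+v}e^{\beta r}|\delta U(r)|^2\,dr\le\int_0^Te^{\beta r}|\delta U(r)|^2\,dr$ for $v\in[-T,0]$ together with $e^{-\beta v}\le e^{\beta T}$, one bounds $\E\int_0^Te^{\beta s}|\delta f(s)|^2\,ds$ by $Ke^{\beta T}$ times $\|(\delta U,\delta V)\|^2$. Adding a Burkholder--Davis--Gundy bound for $\E\sup_te^{\beta t}|\delta Y(t)|^2$, choosing $\beta$ with $\beta\max(1,T)\le1$ so that $e^{\beta T}\le e$, and collecting the numerical factors (a $2$ from Young, a $4$ from Burkholder--Davis--Gundy, and a $\max(1,T)$ from a Cauchy--Schwarz passage $\int_0^Te^{\beta r}|\delta U(r)|^2\,dr\le T\sup_re^{\beta r}|\delta U(r)|^2$ in the $\mathcal{S}^2$-part), one arrives at $\|\Phi(U,V)-\Phi(U',V')\|^2\le 8Ke\max(1,T)\,\|(U,V)-(U',V')\|^2$. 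Under $8Ke\max(1,T)<1$ this is a strict contraction, and the Banach fixed point theorem furnishes the unique $(Y^{\epsilon},Z^{\epsilon})$ in $\mathcal{S}^2(\R)\times\mathcal{H}^2(\R)$ solving the backward equation; together with the forward step, $(X^{\epsilon},Y^{\epsilon},Z^{\epsilon})$ is the unique adapted solution of \eqref{Eq1}.

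The delicate point is this last step, and inside it the bookkeeping that absorbs the delay: one must exchange the Lebesgue time integral with the measure $\alpha$, exploit that $\alpha$ is a probability measure and that the shifted processes coincide with their matching deterministic initial data on $[-T,0]$ in order to dominate the ``past'' by the ``present'', and then tune $\beta$, the Young parameter and the Burkholder--Davis--Gundy constant so that the resulting contraction constant is exactly $8Ke\max(1,T)$ rather than something coarser; reconciling the $\mathcal{S}^2$-norm of $\delta Y$ with the $\mathcal{H}^2$-type quantities produced by It\^o's formula is what forces the extra $\max(1,T)$ and the constrained choice of $\beta$.
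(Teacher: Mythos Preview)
Your proposal is correct and follows the same overall strategy as the paper: exploit the decoupled structure, establish $X^{\epsilon}$ first, and then solve the delayed BSDE by a contraction argument producing the constant $8Ke\max(1,T)$. The allocation of detail is reversed, however: the paper spells out the forward step and simply cites Delong--Imkeller \cite{DI1} for the backward contraction, whereas you sketch the forward step (invoking \cite{Manga-AMAN}) and carry out the backward contraction explicitly. There is also a minor technical difference in the forward part: the paper does not use an exponential-weight norm but instead runs the Picard iteration on a short subinterval $[0,t_0]$ with $2Kt_0(T+4\epsilon)<1$, obtains a local solution, and then patches finitely many such intervals to reach $T$, with uniqueness via Gronwall; your exponential-weight route achieves the same global conclusion in one pass.
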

\begin{proof} 
We refer the reader to the works of Delong and Imkeller \cite{DI1} for the proof of existence solution of the backward BSDE appears in \eqref{Eq1}.
\\ Let us prove the existence result for the forward SDE one.

	To prove the existence of a solution of the equation \eqref{Eq1}, we follow the classical idea by constructing a Picard scheme and its convergence. \\
For a fix $t_0$,	let $X^{\epsilon,0}(s)=0, \forall \ s \in [0, t_0]$ and define recursively for $n \in \mathbb{N}$ and $s \in [0, t_0]$,
	\begin{equation}\label{ea}
		X^{\epsilon,n+1}(s) = x + \int_{0}^{s}b(r,X^{\epsilon,n}_{r})dr +\sqrt{\epsilon} \int_{0}^{s}\sigma(r,X^{\epsilon,n}_{r})dW(r).
	\end{equation}
	We then have:
	\begin{equation*}
		X^{\epsilon,n+1}(s)- X^{\epsilon,n}(s) =  \int_{0}^{s}\left[ b(r,X^{\epsilon,n}_{r})- b(r,X^{\epsilon,n-1}_{r})\right] dr + \sqrt{\epsilon} \int_{0}^{s}\left[ \sigma(r,X^{\epsilon,n}_{r}) - \sigma(r,X^{\epsilon,n-1}_{r})\right] dW(r) .
	\end{equation*}
	Using Young's inequality, we have 
	\begin{eqnarray}\label{ey}
	\sup_{0 \leq s \leq t_0}	\mid X^{\epsilon,n+1}(s)- X^{\epsilon,n}(s)\mid^{2} 
			 &\leq & 2\sup_{0 \leq s \leq t_0}\mid \int_{0}^{s}\left[ b(r,X^{\epsilon,n}_{r})- b(r,X^{\epsilon,n-1}_{r})\right] dr\mid^{2}\notag \\ && + 2\sup \limits_{0 \leq s \leq t_0}\mid \sqrt{\epsilon}  \int_{0}^{s}\left[ \sigma(r,X^{\epsilon,n}_{r}) - \sigma(r,X^{\epsilon,n-1}_{r})\right] dW(r)\mid^{2}.\notag \\ 
	\end{eqnarray}
	Using Holder's inequality, assumption \textbf{(A1)} and Fubini's theorem respectively we have                                                 
	\begin{eqnarray}\label{eh}
&&	\E 	\left[ \sup_{0 \leq s \leq t_0}\mid \int_{0}^{s}\left[ b(r,X^{\epsilon,n}_{r})- b(r,X^{\epsilon,n-1}_{r})\right] dr\mid^{2}\right]\notag \\ 
		 &\leq & 2T\E \left[\int_{0}^{t_0}\mid b(r,X^{\epsilon,n}_{r})- b(r,X^{\epsilon,n-1}_{r})\mid^{2}dr \right] \notag \\
		 &\leq & 2 T K \E \left[\int_{0}^{t_0} \int_{-T}^{0}|X^{\epsilon,n}(r+u)-X^{\epsilon,n-1}(r+u)|^2 \alpha(du)dr\right] \notag\\ 
		  &\leq & 2 T K \E \left[ \int_{0}^{t_0} |X^{\epsilon,n}(r)-X^{\epsilon,n-1}(r)|^2 dr\right].
	\end{eqnarray}	
	On the other hand, using Doob's inequality, assumption \textbf{(A1)} and Fubini's theorem respectively we have
	\begin{eqnarray}\label{eg}
		&&\E \left[\sup \limits_{0 \leq s \leq t_0}\mid\sqrt{\epsilon}  \int_{0}^{s}\left[ \sigma(r,X^{\epsilon,n}_{r}) - \sigma(r,X^{\epsilon,n-1}_{r})\right] dW(r)\mid^{2}\right] \notag \\
		&& \leq 4 \epsilon\E\left[\int_{0}^{t_0}\mid \sigma(r,X^{\epsilon,n}_{r}) - \sigma(r,X^{\epsilon,n-1}_{r})\mid^{2}dr\right] \notag \\ &\leq & 4 K \epsilon\E \left[ \int_{0}^{t_0} \int_{-T}^{0}|X^{\epsilon,n}(r+u)-X^{\epsilon,n-1}(r+u)|^2 \alpha(du)dr\right] \notag\\ 
		&\leq & 4  K \epsilon \E \left[ \int_{0}^{t_0} |X^{\epsilon,n}(r)-X^{\epsilon,n-1}(r)|^2 dr\right].
	\end{eqnarray}
Taking expectation in \eqref{ey} together with \eqref{eh} and \eqref{eg} we get 
	\begin{eqnarray*}
		&&\E\left[ \sup \limits_{0 \leq s \leq t_0}\mid X^{\epsilon,n+1}(s)- X^{\epsilon,n}(s)\mid^{2}\right]  \\
		&& \leq 2K(T+ 4\epsilon)\E\left[  \int_{0}^{t_0} |X^{\epsilon,n}(r)-X^{\epsilon,n-1}(r)|^2 dr \right] \\ 
		&& \leq 2Kt_0(T+ 4 \epsilon)\E\left[ \sup_{0 \leq s \leq t_0} |X^{\epsilon,n}(s)-X^{\epsilon,n-1}(s)|^2  \right]. 
	\end{eqnarray*}
	
By iteration method we obtain
	\begin{eqnarray}\label{ei}
		&&\E\left[ \sup \limits_{0 \leq s \leq t_0}\mid X^{\epsilon,n+1}(s)- X^{\epsilon,n}(s)\mid^{2}\right] \notag \\
		&& \leq \Big(2Kt_0(T+ 4\epsilon)\Big)^n\E\left[ \sup_{0 \leq s \leq t_0} |X^{\epsilon,1}(s)|^2  \right]. 
	\end{eqnarray}
Since it's not difficult to prove that $\displaystyle\E\left[ \sup_{0 \leq s \leq t_0} |X^{\epsilon,n}(s)|^2  \right]< + \infty$ for all $n$. In this fact, taking  $ t_{0} \in [0,T] $ such that $2Kt_{0}(T+4 \epsilon) < 1$,  it follows from  \eqref{ei} that $\{X^{\epsilon, n}(t), \ 0\leq t  \leq t_0\}_{n \in \N}$ is a Cauchy's  sequence on $\mathcal S^2(\R)$. 
Therefore there exists a process $(X^{\epsilon}(t))_{0\leq t  \leq t_0}$ limit of $\{X^{\epsilon, n}(t), \ 0\leq t  \leq t_0\}_{n \in \N}$ in  $\mathcal S^2(\R)$.

By passing to the limit in \eqref{ea} we obtain 
	\begin{equation*}
		X^{\epsilon}(t) = x + \int_{0}^{t}b(s,X_{s}^{\epsilon})ds + \sqrt{\epsilon}\int_{0}^{t}\sigma(s,X_{s}^{\epsilon})dW(s) \; ; \; 0 \leq t \leq t_{0}.
	\end{equation*}
If $t_0= T$ the proof of existence is complete. But if $t_0 \neq T$ we need to prove an existence result of forward SDE 
	\begin{equation}\label{eb}
	X^{\epsilon}(t) = X^{\epsilon}(t_0) + \int_{t_0}^{t}b(s,X_{s}^{\epsilon})ds + \sqrt{\epsilon}\int_{t_0}^{t}\sigma(s,X_{s}^{\epsilon})dW(s) \; ; \; t_0 \leq t \leq T.
\end{equation}
Using the same method as above, we can find a $t_1$ such that $2K(t_1- t_0)(T+ 4 \epsilon) < 1$ and there exist a process $(X^{\epsilon}(t))_{t_0\leq t  \leq t_1}$ solution of the forward SDE of \eqref{eb}. If $t_1= T$ the proof is finish. Otherwise, we repeat the above processes. Thus we get a sequence $(t_i)_{i\geq 0} $ such that $0 < t_0 < t_1 < ...< t_p\leq T$.
 One can find  $p \in \N$ (for exemple $p=[2K T(T+4 \epsilon)] +1$ )  such that $t_p= T$ and    $2K(T- t_{p-1})(T+ 4 \epsilon) < 1.$
 Finally we obtain the existence result for the forward SDE of \eqref{Eq1} on the time interval $[0, T]$.

For the uniqueness result, let consider $X^{\epsilon,1}$ and $X^{\epsilon,2}$ two solutions of the forward SDE of \eqref{Eq1}. It follows by the same computation used in the proof of existence that: for all $s\in [0, T]$ 
\begin{eqnarray*}
	\E\left[ \mid X^{\epsilon,1}(s)- X^{\epsilon,2}(s)\mid^{2}\right]   \leq 2KT(T+ 4 \epsilon)\E\left[ \int_{0 }^T |X^{\epsilon,1}(r)-X^{\epsilon,2}(r)|^2 dr \right]. 
\end{eqnarray*}
We end by using Gronwall's Lemma and obtain $X^{\epsilon,1}= X^{\epsilon,2}$ a.s.
	
\end{proof}
Let us end this section deriving the following  needed result.
\begin{lem}\label{l0}
	Assume that \textbf{(A1)} and \textbf{(A2)} hold. Then we have 
\begin{eqnarray}
&& 	\E \left[ \sup_{ 0\leq s \leq T} |X^{\epsilon}(s) |^2 \right] \leq C_2 (1+ |x|^2), \notag \\ 
&& \E \left[ \sup_{ 0\leq s \leq T} |Y^{\epsilon}(s) |^2  \right]\leq C_2 (1+ |x|^2),   \notag \\
&& \E \left[ \int_{ 0}^T |Z^{\epsilon}(s) |^2ds\right] \leq C_2 (1+ |x|^2),   
\end{eqnarray} where $C_2$ is a positive constant indepnedent of $\epsilon$ and $t$	
\end{lem}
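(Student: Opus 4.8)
The plan is to prove the three bounds in turn, deducing the backward ones from the forward one. \emph{Forward bound.} I would start from the first line of \eqref{FBSDDE}, apply Young's inequality to separate the contributions of $x$, the Lebesgue integral and the stochastic integral, take $\sup_{0\le s\le t}$ and then expectation. H\"older's inequality controls the drift term by $t\,\E\int_0^t|b(r,X^\epsilon_r)|^2\,dr$ and Doob's $L^2$-maximal inequality controls the diffusion term by $4\epsilon\,\E\int_0^t|\sigma(r,X^\epsilon_r)|^2\,dr$; since $\epsilon\in(0,1]$, the resulting constants are $\epsilon$-independent. Using the linear-growth consequence of (\textbf{A1}), namely $|\phi(r,x_r)|^2\le 2|\phi(r,0)|^2+2K\int_{-T}^0|x(r+u)|^2\alpha(du)$ for $\phi=b,\sigma$, together with the elementary bound $\int_{-T}^0|X^\epsilon(r+u)|^2\alpha(du)\le\sup_{0\le w\le r}|X^\epsilon(w)|^2$ (valid because $X^\epsilon\equiv x$ on $[-T,0]$, $|X^\epsilon(0)|=|x|$, and $\alpha$ is a probability measure), and (\textbf{A1})(iii), I get an inequality of the form
\[ \E\Big[\sup_{0\le s\le t}|X^\epsilon(s)|^2\Big]\le C(1+|x|^2)+C\int_0^t\E\Big[\sup_{0\le w\le r}|X^\epsilon(w)|^2\Big]\,dr, \]
to which Gronwall's lemma applies, giving the first estimate with $C_2=Ce^{CT}$.

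\emph{Backward bounds.} Here I would freeze the forward solution and view the second line of \eqref{FBSDDE} as a time-delayed BSDE with driver $\tilde f(s,y_s,z_s):=f(s,X^\epsilon_s,y_s,z_s)$ and terminal value $g(X^\epsilon_T)$. Applying It\^o's formula to $|Y^\epsilon(s)|^2$ on $[t,T]$ gives
\[ |Y^\epsilon(t)|^2+\int_t^T|Z^\epsilon(s)|^2\,ds=|g(X^\epsilon_T)|^2+2\int_t^T Y^\epsilon(s)\tilde f(s,Y^\epsilon_s,Z^\epsilon_s)\,ds-2\int_t^T Y^\epsilon(s)Z^\epsilon(s)\,dW(s). \]
Then: the terminal term is controlled by (\textbf{A2}) and the forward bound, since $\E|g(X^\epsilon_T)|^2\le 2\E|g(0)|^2+2K\,\E[\sup_{0\le s\le T}|X^\epsilon(s)|^2]\le C(1+|x|^2)$; the drift term is split by $2|Y^\epsilon|\,|\tilde f|\le\eta^{-1}|Y^\epsilon|^2+\eta|\tilde f|^2$ and estimated through the growth bound from (\textbf{A3}), $|\tilde f(s,\cdot)|^2\le 2|f(s,X^\epsilon_s,0,0)|^2+2K\int_{-T}^0(|Y^\epsilon(s+v)|^2+|Z^\epsilon(s+v)|^2)\alpha(dv)$, with in turn $\E\int_0^T|f(s,X^\epsilon_s,0,0)|^2\,ds\le C(1+\E[\sup_s|X^\epsilon(s)|^2])\le C(1+|x|^2)$ by (\textbf{A3})(iii) and the forward bound; the delay integrals are rearranged by Fubini, the delayed $Y$-term being dominated by a multiple of $T\sup_{[0,T]}|Y^\epsilon|^2$ and the delayed $Z$-term satisfying $\int_t^T\int_{-T}^0|Z^\epsilon(s+v)|^2\alpha(dv)ds\le\int_{-T}^T|Z^\epsilon(w)|^2dw=\int_0^T|Z^\epsilon(w)|^2dw$ since $Z^\epsilon\equiv0$ on $[-T,0]$; finally, the Burkholder--Davis--Gundy and Young inequalities bound $\E[\sup_{0\le t\le T}|\int_t^T Y^\epsilon Z^\epsilon\,dW|]$ by a small multiple of $\E[\sup_{0\le t\le T}|Y^\epsilon(t)|^2]$ plus a multiple of $\E\int_0^T|Z^\epsilon(s)|^2ds$. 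Taking $\sup$ in $t$ (for the $Y$-bound) and $t=0$ (for the $Z$-bound) and collecting terms, one reaches $A\le C(1+|x|^2)+\Theta A$ with $A=\E[\sup_{0\le t\le T}|Y^\epsilon(t)|^2]+\E\int_0^T|Z^\epsilon(s)|^2ds$ and $\Theta<1$, hence $A\le C'(1+|x|^2)$, which gives the last two estimates. Alternatively one may simply quote the a priori estimate of Delong--Imkeller \cite{DI1} for the delayed BSDE with driver $\tilde f$ and terminal value $g(X^\epsilon_T)$, and then insert the forward bound.

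\emph{Main obstacle.} The only non-routine point is making $\Theta<1$: the delayed dependence of $f$ on $Z^\epsilon$ genuinely feeds $\E\int_0^T|Z^\epsilon|^2$ back into the right-hand side, and $\E[\sup|Y^\epsilon|^2]$ reappears both there and via the Burkholder--Davis--Gundy term, so the estimate does not close by itself. Closing it requires tracking the constants carefully — choosing the Young parameter $\eta$, and working in the $e^{\beta s}$-weighted norms of $\mathcal S^2$ and $\mathcal H^2$ with $\beta$ tuned so that the Fubini factor $e^{\beta T}$ equals $e$ — and then invoking the quantitative smallness hypothesis $8Ke\max(1,T)<1$, exactly the mechanism behind the contraction argument of \cite{DI1} and the existence part of the preceding proposition. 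With the constants organised this way they are patently uniform in $\epsilon\in(0,1]$ and in $t$, as claimed.
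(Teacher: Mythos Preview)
The paper states Lemma~\ref{l0} without proof, so there is no argument to compare against directly. Your proposal is correct and is precisely the natural route: the forward estimate is obtained by the same Young/H\"older/Doob/Fubini-on-the-delay/Gronwall chain that the paper itself uses in the existence part of Proposition~2.1, and the backward estimates are the standard a~priori bounds for time-delayed BSDEs obtained by It\^o's formula on $|Y^\epsilon|^2$ together with BDG, exactly the Delong--Imkeller mechanism \cite{DI1} that the paper invokes (and that underlies the sketch at the start of the proof of Lemma~\ref{l1}). Your identification of the smallness hypothesis $8Ke\max(1,T)<1$ as the device that makes the backward estimate close, via the $e^{\beta s}$-weighted norms of $\mathcal S^2\times\mathcal H^2$, is on point and matches the standing assumption of Proposition~2.1; the resulting constants are visibly uniform in $\epsilon\in(0,1]$.
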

\begin{lem}\label{l1}
	Assume that \textbf{(A1)} and \textbf{(A2)} hold. Pick $0< \epsilon_2 < \epsilon_1 < 1$. Then we have 
	\begin{eqnarray}
	& (i)\ \ 	\E \left(  \displaystyle{\sup_{0 \leq s \leq T}} | X^{\epsilon_1} (s) - X^{\epsilon_2} (s)|^2\right) \leq  C (\sqrt{\epsilon_1} - \sqrt{\epsilon_2})^2\notag\\ \notag\\ & (ii)\ \  \E \left(  \displaystyle{\sup_{0 \leq s \leq T}} | Y^{\epsilon_1} (s) - Y^{\epsilon_2} (s)|^2\right) \leq  C (\sqrt{\epsilon_1} - \sqrt{\epsilon_2})^2 \\ \notag \\  & (iii)\ \  \E \left( \displaystyle \int_0^T| Z^{\epsilon_1} (s) - Z^{\epsilon_2} (s)|^2 ds\right) \leq  C (\sqrt{\epsilon_1} - \sqrt{\epsilon_2})^2\nonumber  
	\end{eqnarray}
where $C$ is a constant independent of $\epsilon$ and $T$.
\end{lem}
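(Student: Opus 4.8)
The plan is to establish the forward estimate (i) first and then feed it into the backward estimates (ii)--(iii), working throughout with the integral form of \eqref{Eq1} and following the computational pattern already used in the proof of the Proposition.

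For (i), I would subtract the two copies of the forward SDE and split the martingale part as
\[
\sqrt{\epsilon_1}\int_0^s\big[\sigma(r,X^{\epsilon_1}_r)-\sigma(r,X^{\epsilon_2}_r)\big]dW(r)\;+\;(\sqrt{\epsilon_1}-\sqrt{\epsilon_2})\int_0^s\sigma(r,X^{\epsilon_2}_r)dW(r),
\]
so that the first summand carries a Lipschitz increment and the second isolates the factor $\sqrt{\epsilon_1}-\sqrt{\epsilon_2}$. Applying Young's inequality, then H\"older's inequality on the drift term and Doob's inequality on the martingale terms, using \textbf{(A1)} together with Fubini and the fact that $\alpha$ is a probability measure, and noting that $X^{\epsilon_1}-X^{\epsilon_2}\equiv 0$ on $[-T,0]$, one arrives at
\[
\E\Big[\sup_{0\le u\le s}|X^{\epsilon_1}(u)-X^{\epsilon_2}(u)|^2\Big]\le C(\sqrt{\epsilon_1}-\sqrt{\epsilon_2})^2\,\E\!\int_0^T|\sigma(r,X^{\epsilon_2}_r)|^2dr+C\!\int_0^s\E\Big[\sup_{0\le u\le r}|X^{\epsilon_1}(u)-X^{\epsilon_2}(u)|^2\Big]dr.
\]
The first term on the right is bounded by $C(1+|x|^2)(\sqrt{\epsilon_1}-\sqrt{\epsilon_2})^2$ using the linear growth implied by \textbf{(A1)}(i),(iii) together with Lemma \ref{l0}, and Gronwall's lemma then gives (i).

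For (ii)--(iii), I would subtract the two backward equations and set $\delta Y=Y^{\epsilon_1}-Y^{\epsilon_2}$, $\delta Z=Z^{\epsilon_1}-Z^{\epsilon_2}$, $\delta g=g(X^{\epsilon_1}_T)-g(X^{\epsilon_2}_T)$ and $\delta f(r)=f(r,X^{\epsilon_1}_r,Y^{\epsilon_1}_r,Z^{\epsilon_1}_r)-f(r,X^{\epsilon_2}_r,Y^{\epsilon_2}_r,Z^{\epsilon_2}_r)$. Applying It\^o's formula to $|\delta Y(s)|^2$, taking expectations (the stochastic integral is a true martingale by Lemma \ref{l0} and the Burkholder--Davis--Gundy inequality), and using $2\delta Y(r)\delta f(r)\le \gamma^{-1}|\delta Y(r)|^2+\gamma|\delta f(r)|^2$ gives
\[
\E|\delta Y(s)|^2+\E\!\int_s^T|\delta Z(r)|^2dr\le \E|\delta g|^2+\gamma^{-1}\E\!\int_s^T|\delta Y(r)|^2dr+\gamma\,\E\!\int_s^T|\delta f(r)|^2dr.
\]
Now \textbf{(A2)} bounds $\E|\delta g|^2$ by a multiple of $\E[\sup_{[0,T]}|X^{\epsilon_1}-X^{\epsilon_2}|^2]$, hence by $C(\sqrt{\epsilon_1}-\sqrt{\epsilon_2})^2$ via (i); \textbf{(A3)}(i) bounds $|\delta f(r)|^2$ by a weighted $\alpha$-average over $v\in[-T,0]$ of the three increments at $r+v$, and Fubini plus the probability-measure property of $\alpha$ plus the extensions $\delta Y\equiv\delta Y(0)$ and $\delta Z\equiv 0$ on $[-T,0]$ turn these delay integrals into multiples of $\E[\sup_{[0,T]}|\delta Y|^2]$, $\E\int_0^T|\delta Z|^2dr$ and (again via (i)) $(\sqrt{\epsilon_1}-\sqrt{\epsilon_2})^2$. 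Supplementing this with a Burkholder--Davis--Gundy estimate for $\E\big[\sup_s|\int_s^T\delta Y(r)\,\delta Z(r)\,dW(r)|\big]$ to recover the supremum in $\delta Y$, one gets an inequality of the shape
\[
\E\Big[\sup_{[0,T]}|\delta Y|^2\Big]+\E\!\int_0^T|\delta Z(r)|^2dr\le C(\sqrt{\epsilon_1}-\sqrt{\epsilon_2})^2+c(K,\gamma,T)\Big(\E\Big[\sup_{[0,T]}|\delta Y|^2\Big]+\E\!\int_0^T|\delta Z(r)|^2dr\Big),
\]
and choosing $\gamma$ suitably the structural constant satisfies $c(K,\gamma,T)<1$ under the hypothesis $8Ke\max(1,T)<1$, so the last term is absorbed and (ii)--(iii) follow.

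The step I expect to be the main obstacle is precisely this absorption: since $f$ depends on the past of $Z$, the term $\gamma K\,\E\int_0^T\int_{-T}^0|\delta Z(r+v)|^2\alpha(dv)dr$ reproduces on the right-hand side the very quantity $\E\int_0^T|\delta Z|^2dr$ carried on the left, and it can be moved across only when the combined constant stays below $1$; this is exactly where $8Ke\max(1,T)<1$ is needed, and it forces a careful choice of the auxiliary parameter $\gamma$ (equivalently, of an exponential weight $e^{\beta\cdot}$, in the spirit of the norms defining $\mathcal S^2$ and $\mathcal H^2$). A minor point to record is that every stochastic integral appearing is genuinely a martingale, which is immediate from $(X^\epsilon,Y^\epsilon,Z^\epsilon)\in\mathcal H^2\times\mathcal S^2\times\mathcal H^2$ and Lemma \ref{l0}; and one should observe that the constants produced are independent of $\epsilon$ because $0<\epsilon_2<\epsilon_1<1$ bounds every occurrence of $\epsilon$ by $1$, and remain controlled in $T$ because the horizon is fixed and the smallness condition prevents the Gronwall and absorption constants from degenerating.
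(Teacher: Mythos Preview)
Your proposal is correct and follows essentially the same route as the paper: reduce (ii)--(iii) to (i) via the standard a-priori stability estimate for the delayed BSDE under an exponential weight $e^{\beta\cdot}$ (the paper simply asserts this inequality and invokes the smallness condition), and obtain (i) by splitting the diffusion as $\sqrt{\epsilon_1}[\sigma(X^{\epsilon_1}_r)-\sigma(X^{\epsilon_2}_r)]+(\sqrt{\epsilon_1}-\sqrt{\epsilon_2})\sigma(X^{\epsilon_2}_r)$, using \textbf{(A1)} with Fubini, and closing by Gronwall. The only cosmetic difference is that the paper derives (i) by applying It\^o's formula to $|\Delta X(s)|^2$ to get a pointwise bound first and then upgrades to the supremum via BDG, whereas you work directly on the supremum norm via Young/Doob as in the Proposition; the two variants are equivalent.
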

\begin{proof} It's not difficult to prove that  
	 $(ii)$ and $(iii)$ follow from $(i)$. Indeed, one can show that there exists a constant $C> 0$ such that 
	 \begin{eqnarray*}
	 	  &&\E \left( \sup_{0\leq s \leq T} e^{\beta s}|Y^{\epsilon_1}(s) - Y^{\epsilon_2}(s)|^2+ \int_0^Te^{\beta s} |Z^{\epsilon_1}(s) - Z^{\epsilon_2}(s)|^2 ds\right) \\ &\leq &  C \E \Big( \sup_{0 \leq s \leq T}e^{\beta s}|X^{\epsilon_1}(s)- X^{\epsilon_2}(s)|^2\Big).\notag
	 \end{eqnarray*}
	Therefore  we need to prove  only $(i)$.  For this let us set $\Delta X(s)=  X^{\epsilon_1} (s) - X^{\epsilon_2} (s)$ and denote $C$ a positive  constant independent of $T$ and $\epsilon$ which make vary line to line.   \\ Applying It\^o formula  to $|\Delta X(s)|^2$ together with assumption \textbf{(A1)},  Fubini's theorem and Gronwall's lemma we get
	 \begin{eqnarray} \label{eq3}
	 \E \left( |\Delta X(s)|^2 \right)	 & \leq & 2 \E \left[\int_0^s |\Delta X(r)|^2 dr \right] + K(2+ \epsilon_1) \E \left[\int_{-T}^0 | \Delta X(r+ u)|^2 dr \alpha (du) \right] \notag \\ && + 2K (\sqrt{\epsilon_1} - \sqrt{\epsilon_2})^2 \E \left[ \int_{-T}^0 \int_0^s | X(r+u)|^2 dr \ \alpha (du)  \right] \notag \\ && + 2(\sqrt{\epsilon_1} - \sqrt{\epsilon_2})^2 \E \left[ \int_0^s | \sigma (r, 0)|^2 dr\right]  \notag\\ & \leq & \Big(2+ K(2+ \epsilon_1)\Big)\E \left[ \int_0^s |\Delta X(r)|^2 dr  \right] + C (\sqrt{\epsilon_1}- \sqrt{\epsilon}_2)^2 \notag \\ & \leq & C (\sqrt{\epsilon_1}- \sqrt{\epsilon}_2)^2 
	 \end{eqnarray}
 Using again It\^o formula to $|\Delta X(s)|^2$ and taking the supremum, it follows from BDG inequality and \eqref{eq3} that 
 \begin{eqnarray*}
 	 \E \Big(\sup_{0 \leq s \leq T} |\Delta X(s)|^2 \Big)	\leq C (\sqrt{\epsilon_1}- \sqrt{\epsilon}_2)^2 .  
 \end{eqnarray*}   
\end{proof}
Now consider the following deterministic equations
\begin{eqnarray}\label{eq2}
		\mathcal X(s)&=& x + \int_0^s b(r,\mathcal X_r) dr \notag \\
	\mathcal Y(s)&=& g(\mathcal X_T) + \int_s^T f(r,\mathcal X_r,\mathcal Y_r, 0) dr.
\end{eqnarray} 
\begin{rem}
	If $\epsilon=0$ then \eqref{Eq1} becomes \eqref{eq2}. 
\end{rem} 
In view of assumptions  \textbf{(A1)}-\textbf{(A3)} and Lemma \eqref{l1} we have:
\begin{propo}
Assume that \textbf{(A1)} and \textbf{(A2)} hold, then there exists a unique solution $(	\mathcal X, 	\mathcal Y )$ for equation  \eqref{eq2}. 
\end{propo}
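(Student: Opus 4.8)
The plan is to read \eqref{eq2} as the $\epsilon=0$ instance of the system \eqref{Eq1} and to establish existence and uniqueness for its forward and backward components in turn; the forward component is essentially a rerun of the argument already given for Proposition 2.1, while the backward component needs one extra fixed-point argument carried out globally on $[0,T]$.

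For the forward equation $\mathcal X(s)=x+\int_0^s b(r,\mathcal X_r)\,dr$, I would observe that it is precisely the forward SDE of \eqref{Eq1} with the diffusion term removed, so the Picard scheme and the H\"older--Fubini estimate built on \textbf{(A1)}(i) that were used in the proof of Proposition 2.1 apply without change (they only simplify, the stochastic term \eqref{eg} dropping out): one gets a contraction of $\mathcal X^{n}\mapsto\mathcal X^{n+1}$ on each sufficiently short sub-interval $[t_i,t_{i+1}]$, no smallness of $T$ being needed since $[0,T]$ is exhausted by finitely many such sub-intervals, and uniqueness comes from Gronwall's lemma exactly as there. Consequently $\mathcal X$ is a \emph{deterministic} function, hence so is $g(\mathcal X_T)\in\R$ and so are the frozen segments $r\mapsto\mathcal X_r$.

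With $\mathcal X$ in hand the backward equation becomes the deterministic delayed integral equation
$$\mathcal Y(s)=g(\mathcal X_T)+\int_s^T f\big(r,\mathcal X_r,\mathcal Y_r,0\big)\,dr,\qquad s\in[0,T],$$
extended by $\mathcal Y(s)=\mathcal Y(0)$ for $s<0$, in which $g(\mathcal X_T)$ is a constant and, by \textbf{(A3)}(i), the map $\mathcal Y\mapsto f(\cdot,\mathcal X_\cdot,\mathcal Y_\cdot,0)$ is Lipschitz from the segment space built on $L^\infty_{-T}(\R)$ into $L^2([0,T])$. I would solve it by Picard iteration $\mathcal Y^{n+1}(s)=g(\mathcal X_T)+\int_s^T f(r,\mathcal X_r,\mathcal Y^n_r,0)\,dr$: Cauchy--Schwarz in $r$, Fubini, \textbf{(A3)}(i), and the bookkeeping that every past value $\mathcal Y^n(r+v)$ with $r+v\le0$ equals $\mathcal Y^n(0)$ and is therefore dominated by the supremum over $[0,T]$, reduce the analysis to a contraction estimate for $\sup_{0\le s\le T}e^{\beta s}|\mathcal Y^{n+1}(s)-\mathcal Y^n(s)|^2$ in a suitably weighted sup-norm; optimising $\beta\in(0,\infty)$, this becomes a strict contraction under the standing smallness hypothesis $8Ke\max(1,T)<1$, exactly as in Delong and Imkeller \cite{DI1}, so the iterates converge uniformly on $[-T,T]$ to a solution $\mathcal Y$, and the same estimate applied to the difference of two solutions yields uniqueness. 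Equivalently, since $\mathcal X$ is deterministic, $(\mathcal Y,0)$ is the unique solution of the time-delayed BSDE with deterministic data $g(\mathcal X_T)$ and generator $f(r,\mathcal X_r,\cdot,0)$, so existence and uniqueness may be inherited directly from \cite{DI1}; and the whole statement can alternatively be recovered by letting $\epsilon\downarrow0$ in \eqref{Eq1}, since Lemma \ref{l1} makes $(X^\epsilon,Y^\epsilon,Z^\epsilon)$ Cauchy as $\epsilon\downarrow0$ while the a priori bound of Lemma \ref{l0} sends both the $\sqrt\epsilon$-martingale and the limiting control to $0$.

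The main obstacle is this backward equation. Because one integrates from $T$ downwards while the generator at time $r$ reads the values of $\mathcal Y$ on $[r-T,r]$ --- times not yet reached when $r$ decreases --- the interval $[0,T]$ cannot be split into short sub-intervals treated one after another, unlike the forward equation; the fixed point has to be taken once and for all on all of $[0,T]$, and it is precisely the smallness condition carried over from Proposition 2.1 that makes the corresponding map a contraction. The only other point requiring care is to keep the extension convention $\mathcal X(s)=x$, $\mathcal Y(s)=\mathcal Y(0)$ for $s<0$ throughout, so that all delayed terms stay controlled by suprema over $[0,T]$ and the contraction estimate closes.
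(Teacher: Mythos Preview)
Your proposal is correct and subsumes the paper's own justification, which is not a formal proof but the single line ``In view of assumptions \textbf{(A1)}--\textbf{(A3)} and Lemma \ref{l1}'' preceding the statement, together with the Remark that \eqref{eq2} is \eqref{Eq1} at $\epsilon=0$. Your third alternative --- passing to the limit $\epsilon\downarrow 0$ via the Cauchy estimate of Lemma \ref{l1} --- is exactly what the paper has in mind, while your first two routes (rerunning Proposition 2.1 at $\epsilon=0$ and invoking \cite{DI1} for the backward part with deterministic data) are the direct arguments that make that one-line justification precise.
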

	\section{Main results}
 
	\subsection{Convergence of distributions}
	\begin{theo} Under the assumptions \textbf{(A1)}-\textbf{(A3)}, we have the following results let $(X^{\epsilon}, Y^{\epsilon}, Z^{\epsilon})$, $(\mathcal{X}, \mathcal{Y})$ be respectively the unique solution of \eqref{Eq1} and \eqref{eq2} :
		\begin{itemize}
			\item [(i)] For all $\delta > 0$
			 \begin{equation}\label{eq4}
				\lim_{\epsilon \rightarrow 0} \P \left( \sup_{0 \leq s\leq T}|X^{\epsilon}(s)- \mathcal X(s)| > \delta  \right)= 0.	
			\end{equation}
			\item [(ii)] Let $Q^{\epsilon}$ be the probability measure on $ D(\R^n)$ defined by $Q^{\epsilon}(A)= \P\left(\big(Y^{\epsilon}(.)\big)^{-1}(A) \right)$. 	Then 	there exists a subsequence $Q^{\epsilon_n}$ of $Q^{\epsilon}$ and a probability law $Q$ on $D(\R^n)$ such that $Q^{\epsilon_n}$ converges weakly in the Meyer-Zheng topology to $Q$ as $n \rightarrow + \infty$.
		\end{itemize}
	\end{theo}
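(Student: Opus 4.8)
The plan is to treat the two assertions separately. For (i), I would observe that the deterministic process $\mathcal X$ is exactly the solution of the forward equation in \eqref{Eq1} corresponding to $\epsilon=0$; hence the difference $X^{\epsilon}-\mathcal X$ can be estimated precisely as in the proof of Lemma \ref{l1}(i), taking there $\epsilon_1=\epsilon$ and $\epsilon_2=0$. Rerunning the same It\^o formula / Fubini / Gronwall argument (now there is no $\sigma\,dW$ term in the $\mathcal X$-equation, so the cross terms only improve) yields a bound of the form
\begin{equation*}
\E\Big(\sup_{0\le s\le T}|X^{\epsilon}(s)-\mathcal X(s)|^2\Big)\le C\,\epsilon,
\end{equation*}
with $C$ independent of $\epsilon$. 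Then \eqref{eq4} is immediate from Chebyshev's inequality, since
$\P\big(\sup_{0\le s\le T}|X^{\epsilon}(s)-\mathcal X(s)|>\delta\big)\le C\epsilon/\delta^{2}\to 0$ as $\epsilon\to 0$.

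For (ii), the strategy is to invoke the Meyer--Zheng tightness criterion for the pseudo-path topology on $D([0,T],\R^{n})$: a family of c\`adl\`ag processes $\{Y^{\epsilon}\}$ is tight as soon as
\begin{equation*}
\sup_{\epsilon}\Big(CV_{T}(Y^{\epsilon})+\sup_{0\le t\le T}\E|Y^{\epsilon}(t)|\Big)<+\infty ,
\end{equation*}
where $CV_{T}$ denotes the conditional variation. First I would bound $\sup_{t}\E|Y^{\epsilon}(t)|\le\big(\E\sup_{0\le t\le T}|Y^{\epsilon}(t)|^{2}\big)^{1/2}\le\sqrt{C_{2}(1+|x|^{2})}$ by Cauchy--Schwarz together with Lemma \ref{l0}, which is uniform in $\epsilon$. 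Next, reading off from \eqref{Eq1} the canonical semimartingale decomposition $dY^{\epsilon}(t)=-f(t,X^{\epsilon}_{t},Y^{\epsilon}_{t},Z^{\epsilon}_{t})\,dt+Z^{\epsilon}(t)\,dW(t)$, the martingale part $\int_{0}^{\cdot}Z^{\epsilon}\,dW$ contributes nothing to the conditional variation, so
\begin{equation*}
CV_{T}(Y^{\epsilon})\le\E\int_{0}^{T}\big|f(r,X^{\epsilon}_{r},Y^{\epsilon}_{r},Z^{\epsilon}_{r})\big|\,dr .
\end{equation*}
Using assumption \textbf{(A3)} in the form $|f(r,u_{r})|\le|f(r,\mathbf 0)|+\sqrt{K}\big(\int_{-T}^{0}|u(r+v)|^{2}\alpha(dv)\big)^{1/2}$, then Cauchy--Schwarz, Fubini's theorem, \textbf{(A3)}(iii) and once more the $\epsilon$-uniform a priori bounds of Lemma \ref{l0} for $X^{\epsilon},Y^{\epsilon},Z^{\epsilon}$, this right-hand side is bounded by a constant independent of $\epsilon$.

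Combining the two bounds, the Meyer--Zheng criterion shows that $\{Q^{\epsilon}\}_{0<\epsilon<1}$ is relatively compact in the space of probability measures on $D([0,T],\R^{n})$ equipped with the Meyer--Zheng (pseudo-path) topology; by sequential compactness there is a subsequence $Q^{\epsilon_{n}}$ and a probability law $Q$ with $Q^{\epsilon_{n}}\Rightarrow Q$. I expect the only delicate point to be the careful justification of the conditional-variation estimate --- that is, confirming that $Y^{\epsilon}$ is genuinely a quasimartingale with the claimed finite-variation part and that every moment entering the bound on $\E\int_{0}^{T}|f|\,dr$ is truly uniform in $\epsilon$ --- everything else being a direct application of the results already established.
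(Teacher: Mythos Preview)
Your proposal is correct and follows essentially the same route as the paper. For (i) you are in fact slightly more direct: the paper splits $\P(\sup|X^{\epsilon}-\mathcal X|>\delta)$ into a drift part and a diffusion part and applies Chebyshev to each, but in bounding the drift part it ends up invoking the very estimate $\E\big[\sup_{s}|X^{\epsilon}(s)-\mathcal X(s)|^{2}\big]\le C\epsilon$ that you obtain at once from Lemma~\ref{l1} with $\epsilon_{2}=0$; your single Chebyshev step is cleaner and loses nothing. For (ii) both arguments are the Meyer--Zheng criterion: the paper writes out $V^{\pi}_{T}(Y^{\epsilon})$ over a partition, drops the martingale increments, bounds $\E\int_{0}^{T}|f(r,U^{\epsilon}_{r})|\,dr$ via \textbf{(A2)}--\textbf{(A3)}, Fubini and Lemma~\ref{l0}, and then passes through an explicit compact embedding of $D(\R^{n})$ to extract the convergent subsequence---exactly the conditional-variation bound and relative-compactness conclusion you describe.
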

\begin{proof}
	\begin{eqnarray*}
		| X^{\epsilon}(s)-  \mathcal{X}(s) |  &\leq & \left|  \int_0^s [b(r, X_r^{\epsilon}) - b(r, \mathcal X_r)]dr\right| + \sqrt{\epsilon}\left| \int_0^s\sigma(r, X_r^{\epsilon})dW(r)\right|\\
		\sup_{0 \leq s \leq T} 	\left| X^{\epsilon}(s)-  \mathcal{X}(s) \right| &\leq & 	\sup_{0 \leq s \leq T} \left|  \int_0^s [b(r, X_r^{\epsilon}) - b(r, \mathcal X_r)]dr\right| + \sqrt{\epsilon} \sup_{0 \leq s \leq T}  \left| \int_0^s\sigma(r, X_r^{\epsilon})dW(r)\right|
	\end{eqnarray*}
\begin{eqnarray}\label{eq5}
	 \P \left(\sup_{0 \leq s \leq T}  | X^{\epsilon}(s)-  \mathcal{X}(s) |> \delta \right) & \leq & \P \left(\Big|\int_0^T [b(r, X_r^{\epsilon}) - b(r, \mathcal{X}_r)] dr   \Big|> \dfrac{\delta}{2} \right) \notag\\  & +& \P \left(\sqrt{\epsilon}\sup_{0 \leq s \leq T}\Big|\int_0^T \sigma(r, X_r^{\epsilon}) dw(r)   \Big|> \dfrac{\delta}{2} \right)
\end{eqnarray}
\begin{eqnarray}\label{eq6}
	\P \left(\Big|\int_0^T [b(r, X_r^{\epsilon}) - b(r, \mathcal{X}_r)] dr   \Big|> \dfrac{\delta}{2} \right) & \leq &\dfrac{4}{\delta^2} \E \left[ \Big| \int_0^T b(r, X_r^{\epsilon})- b(r, \mathcal X_r) dr \Big|^2\right] \notag \\ & \leq & \dfrac{4K}{\delta^2} \E \Big[  \int_0^T|X^{\epsilon}(r)- \mathcal{X}(r)|^2 dr \Big]\notag \\ & \leq & C\times  \E \Big[  \sup_{0 \leq s \leq T  }| X^{\epsilon}(r)-\mathcal{X}(r)|^2\Big]\notag \\ &\leq & C \epsilon. 
\end{eqnarray}
\begin{eqnarray}\label{eq7}
	  \P \left(\sqrt{\epsilon}\sup_{0 \leq s \leq T}\Big|\int_0^T \sigma(r, X_r^{\epsilon}) dw(r)   \Big|> \dfrac{\delta}{2} \right) & \leq &  \dfrac{4 \epsilon}{\delta^2} \E\Big[\sup_{ 0\leq s \leq T} \Big| \int_0^T \sigma (r, X_r^{\epsilon})dW(r)\Big|^2\Big]\notag \\  & \leq &   \dfrac{4 \epsilon}{\delta^2} \E \left[ \int_0^T | \sigma (r, X_r^{\epsilon})|^2 dr\right]\notag \\ & \leq &  \dfrac{4 \epsilon}{\delta^2}\times K \E \left[ \int_0^T |  X^{\epsilon}(r)|^2 dr\right]+ \dfrac{4 \epsilon}{\delta^2}\times  \E\left[ \int_0^T |\sigma (r, 0)|^2 dr\right]\notag \\ &\leq & C\epsilon.
\end{eqnarray}
Combining \eqref{eq6} and \eqref{eq7} with  \eqref{eq5}  we have 
\begin{eqnarray*}
	\P \left(\sup_{0 \leq s \leq T}  | X^{\epsilon}(s)-  \mathcal{X}(s) |> \delta \right) & \leq & C \epsilon.
\end{eqnarray*} which end the proof of $(i)$.

For a given subdivision $\pi$:$0=t_{0}<t_{1}<....t_{n}=T$, we get
\begin{eqnarray*}
	&&V_{T}^{\pi}(Y^{\epsilon})\\
	&& = \E\left( \mid g(X_{T}^{\epsilon})\mid\right) + \sum_{k=0}^{n-1}\E\left[\mid \E\left[Y^{\epsilon}(t_{k+1})-Y^{\epsilon}(t_{k}) \right]|\mathcal{F}_{t_{k}} \mid \right]\\  
	&& = \E\left( \mid Y^{\epsilon(t_{n})}\mid\right) + \sum_{k=0}^{n-1}\E\left[\mid \E\left[\int_{t_{k}}^{t_{k+1}}f(r,U_{r}^{\epsilon})dr\right]|\mathcal{F}_{t_{k}} \mid \right]\\
	&& \leq \E\left( \mid g(X_{T}^{\epsilon})\mid\right) + \sum_{k=0}^{n-1}\E\left[\int_{t_{k}}^{t_{k+1}}\mid f(r,U_{r}^{\epsilon})\mid dr \right]\\
	&& \leq \E\left( \mid g(X_{T}^{\epsilon})-g(0)\mid + \mid g(0)\mid\right) + \sum_{k=0}^{n-1}\E\left[\int_{t_{k}}^{t_{k+1}}\mid f(r,U_{r}^{\epsilon})-f(r,0)\mid dr \right]  + \E\left[\int_{t_{k}}^{t_{k+1}}\mid f(r,0)\mid dr \right] 
\end{eqnarray*}
Using assumptions \textbf{(A1)}, we then have
\begin{eqnarray*}
	&&V_{T}^{\pi}(Y^{\epsilon})\\
	&& \leq \sqrt{K}\E\left[ \int_{-T}^{0}\mid X^{\epsilon}(T+u)\mid \alpha(du)\right]  + |g(0)| \\
	&& +  \sum_{k=0}^{n-1}\left(\sqrt{K}\E\left[\int_{t_{k}}^{t_{k+1}}\int_{-T}^{0}\mid U^{\epsilon}(r+u) \mid\alpha(du) dr \right] + \E\left[\int_{t_{k}}^{t_{k+1}}\mid f(r,0)\mid dr \right] \right)\\
	&& \leq \sqrt{K}\E\left[ \int_{-T}^{0}\mid X^{\epsilon}(T+u)\mid \alpha(du)\right]  + |g(0)| + \E\left[ \int_{0}^{T}\mid f(r,0)\mid dr  \right]\\                                                            && + \sqrt{K}\E\left[\int_{0}^{T}\int_{-T}^{0}\left(\mid X^{\epsilon}(r+u)| + |Y^{\epsilon}(r+u)|+ |Z^{\epsilon}(r+u) |\right)\alpha(du) dr\right] 
\end{eqnarray*}
Applying Fubini's theorem, changeing the variable, taking the supremum and Jensen's inequality, we have
\begin{eqnarray*}
	V_{T}^{\pi}(Y^{\epsilon})
	& \leq &\sqrt{K}\E\left[ \sup \limits_{0 \leq t \leq T}\mid X^{\epsilon}(t)\mid \right]  + |g(0)|+ \E \Big[  \int_{0}^{T}\mid f(r,0)\mid dr \Big]\\                                            
	    && + \sqrt{K}\E\left( T\sup_{0 \leq t \leq T} \mid X^{\epsilon}(t) \mid + T\sup_{0 \leq t \leq T} \mid Y^{\epsilon}(t)|+ \int_{0}^{T}\mid Z^{\epsilon}(r)\mid dr  \right)\\ && \leq  C \max(1, T)  \left[1+ \E\left( \sup_{0 \leq t \leq T} \mid X^{\epsilon}(t) \mid^2 + \sup_{0 \leq t \leq T} \mid Y^{\epsilon}(t)|^2+ \int_{0}^{T}\mid Z^{\epsilon}(r)\mid^2 dr  \right)\right].
\end{eqnarray*}
Finally, it follows from Lemma \eqref{l0} that $	V_{T}(Y^{\epsilon})=\sup_{\pi}	V_{T}^{\pi}(Y^{\epsilon}) < + \infty.$
 
Now since $D(\R^{n})$ is a separable metric space, there exists a compact metric space $K$ such that $D(\R^{n})$is a subset of $K$. Note that $D(\R^{n})$ is a Lusin space: for every embeddingin a compact metric space $K$, $D(\R^{n})$ is a Borel set in $K$:$D(\R^{n}) \in \mathcal{B}(K)$. On the compact metric space $K$ we define  
\begin{equation*}
	\tilde{Q}^{\epsilon}(A) = Q^{\epsilon}(A\cap D(\R^{n})),\; A \in \mathcal{B}(K).
\end{equation*}

Clearly, $A \cap D(\R^{n})$ belongs to $\mathcal{B}(D(\R^{n})) \mathcal{D}(\R^{n})$, the last equality being true in view of Lemma 3.2. The set of probability measures on the compact metric space $K$ is compact for the weak convergence
. Hence, we can choose a subsquence also denoted $(\epsilon_{n})_{n \geq 1}$, and a probability measure $\tilde{Q}$ on $K$ such that  
\begin{equation*}
	\tilde{Q}^{\epsilon_{n}} \rightarrow^{w} \tilde{Q} \;on \; K.
\end{equation*}
We now show that
\begin{equation*}
	\tilde{Q}(D(\R^{n})) = 1.
\end{equation*}
We notice that $\tilde{Q}^{\epsilon}$ is the distribution of $Y^{\epsilon}$ considered as a random variable with values in $(K,\mathcal{B}(K))$. Furthermore, by proposition 3.1, we know that, possibly along a subsequence, $\tilde{Q}^{\epsilon_{n}}$ converges weakly to a probability law $Q^{*} \in \mathcal{M}(D(\R^{n}))$. The uniqueness of the weak limit implies that  
\begin{eqnarray*}
	Q^{*}(A)= \tilde{Q}(A)&,& \forall \; A \; \in \; \mathcal{B}(D(\R^{n})),
\end{eqnarray*}
In particular,
\begin{eqnarray*}
	&& 1 = Q^{*}(D(\R^{n})) = \tilde{Q}(D(\R^{n})).
\end{eqnarray*}
The proof is complete.

\end{proof}
	\subsection{Convergence almost surely }
	\begin{theo}
		 Under the assumptions \textbf{(A1)}-\textbf{(A3)},  $(X^{\epsilon}(s),Y^{\epsilon}(s),Z^{\epsilon}(s))_{0\leq s \leq T}$ solution of \eqref{Eq1} converge in $ \Big( \mathcal{S}^2(\R)\times  \mathcal{S}^2(\R) \times  \mathcal{H}^2(\R^n)\Big)$	  to $(\mathcal X(s), \mathcal Y(s), 0)_{0\leq s \leq T}$ solution of \eqref{eq2}, when $\epsilon \rightarrow 0$.
	\end{theo}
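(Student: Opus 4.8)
The plan is to prove the $L^2$-type convergence in $\mathcal{S}^2(\R)\times\mathcal{S}^2(\R)\times\mathcal{H}^2(\R^n)$ by combining the strong Cauchy estimate of Lemma~\ref{l1} with a direct comparison between $(X^\epsilon,Y^\epsilon,Z^\epsilon)$ and the deterministic pair $(\mathcal X,\mathcal Y,0)$. First I would handle the forward component: the estimate $(i)$ of Lemma~\ref{l1} shows that $\{X^\epsilon\}$ is Cauchy in $\mathcal{S}^2(\R)$ as $\epsilon\to0$ (taking $\epsilon_2\to0$ along any sequence), hence convergent to some limit; the a.s.\ convergence already established in Theorem~3.1(i), together with the uniform bound of Lemma~\ref{l0} and uniform integrability, identifies this $\mathcal{S}^2$-limit with $\mathcal X$. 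Concretely, I would write the equation for $\Delta X^\epsilon(s)=X^\epsilon(s)-\mathcal X(s)$, apply It\^o's formula to $|\Delta X^\epsilon(s)|^2$, use \textbf{(A1)}(i) and Fubini to control the drift difference by $\int_0^s|\Delta X^\epsilon(r)|^2\,dr$, bound the stochastic term in expectation by $\epsilon\,\E\int_0^T|\sigma(r,X^\epsilon_r)|^2\,dr\le C\epsilon(1+|x|^2)$ via Lemma~\ref{l0} and \textbf{(A1)}(iii), and close with Gronwall; a second application of It\^o together with the Burkholder--Davis--Gundy inequality upgrades this to $\E\big(\sup_{0\le s\le T}|\Delta X^\epsilon(s)|^2\big)\le C\epsilon$, which gives $X^\epsilon\to\mathcal X$ in $\mathcal{S}^2(\R)$.

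Next I would treat the backward component. Set $\Delta Y^\epsilon=Y^\epsilon-\mathcal Y$ and $\Delta Z^\epsilon=Z^\epsilon-0=Z^\epsilon$. Subtracting the two backward equations,
\[
\Delta Y^\epsilon(s)=g(X^\epsilon_T)-g(\mathcal X_T)+\int_s^T\big[f(r,X^\epsilon_r,Y^\epsilon_r,Z^\epsilon_r)-f(r,\mathcal X_r,\mathcal Y_r,0)\big]dr-\int_s^T Z^\epsilon(r)\,dW(r).
\]
Applying It\^o's formula to $e^{\beta s}|\Delta Y^\epsilon(s)|^2$ for a suitably large $\beta$, taking expectations, and using \textbf{(A2)} on the terminal term and \textbf{(A3)}(i) on the generator term (splitting the delayed Lipschitz bound into the $X$-, $Y$- and $Z$-contributions and changing variables in the $\alpha$-integral via Fubini, exactly as in the proof of Lemma~\ref{l1}), one obtains
\[
\E\Big(\sup_{0\le s\le T}e^{\beta s}|\Delta Y^\epsilon(s)|^2+\int_0^T e^{\beta s}|Z^\epsilon(s)|^2\,ds\Big)\le C\,\E\Big(\sup_{0\le s\le T}e^{\beta s}|\Delta X^\epsilon(s)|^2\Big),
\]
which is precisely the inequality already quoted in the proof of Lemma~\ref{l1}. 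Combining this with the forward estimate $\E(\sup_s|\Delta X^\epsilon(s)|^2)\le C\epsilon$ yields $\E(\sup_s|\Delta Y^\epsilon(s)|^2)\to0$ and $\E\int_0^T|Z^\epsilon(s)|^2\,ds\to0$, i.e.\ $Y^\epsilon\to\mathcal Y$ in $\mathcal{S}^2(\R)$ and $Z^\epsilon\to0$ in $\mathcal{H}^2(\R^n)$, which is the claim.

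The main obstacle is the smallness condition needed to absorb the delayed $Z$-term: when applying It\^o to $e^{\beta s}|\Delta Y^\epsilon|^2$, the term $2\Delta Y^\epsilon\cdot\big(f(r,U^\epsilon_r)-f(r,\mathcal U_r)\big)$ produces, after the delay change of variables, a contribution proportional to $\E\int_0^T|Z^\epsilon(r)|^2\,dr$ on the right-hand side, which must be dominated by the $\E\int_0^T e^{\beta s}|Z^\epsilon(s)|^2\,ds$ coming from the martingale part; this is exactly where the hypothesis $8Ke\max(1,T)<1$ (or an analogous $\beta$-weighted smallness) of Proposition~2.1 is used, and one has to be careful that the constant $C$ in the display above stays finite and $\epsilon$-independent. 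The remaining steps — Young's inequality to separate the cross terms, Fubini for the $\alpha$-integrals, Gronwall, and BDG for the supremum — are routine and parallel the computations already carried out in Lemmas~\ref{l0} and~\ref{l1}.
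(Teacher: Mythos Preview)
Your proposal is correct. Both your argument and the paper's rest on the same core input, namely the Lipschitz stability estimate from Lemma~\ref{l1} (and its proof), and your handling of the delayed generator via the $e^{\beta s}$-weighted It\^o formula together with the smallness condition $8Ke\max(1,T)<1$ is exactly the mechanism at work there.

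The organizational route differs slightly. The paper first invokes Lemma~\ref{l1} only to conclude that $(X^\epsilon,Y^\epsilon,Z^\epsilon)$ is Cauchy in $\mathcal{S}^2\times\mathcal{S}^2\times\mathcal{H}^2$ and hence has \emph{some} limit $(\mathcal X,\mathcal Y,\mathcal Z)$; it then identifies this limit by extracting an almost-surely convergent subsequence and passing to the limit in the forward and backward equations separately, using continuity of $b$, $g$, $f$ to recognize the deterministic system \eqref{eq2} and to see $\mathcal Z=0$. You instead compare $(X^\epsilon,Y^\epsilon,Z^\epsilon)$ directly with the \emph{known} deterministic solution $(\mathcal X,\mathcal Y,0)$ and derive the quantitative bound $\E\sup_s|\Delta X^\epsilon(s)|^2+\E\sup_s|\Delta Y^\epsilon(s)|^2+\E\int_0^T|Z^\epsilon|^2\,ds\le C\epsilon$. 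Your version is more direct and yields an explicit rate of convergence (useful downstream for the LDP in Theorem~\ref{pdg}); the paper's version keeps the Cauchy estimate and the limit identification as two separate, reusable steps. Either is fine, and the analytic content is the same.
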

\begin{proof}According  Lemma \ref{l1}
$\Big(X^{\epsilon}(s),Y^{\epsilon}(s),Z^{\epsilon}(s)\Big)_{0\leq s \leq T}$ form a Cauchy sequence  and therefore converges in  $ \Big( \mathcal{S}^2(\R)\times  \mathcal{S}^2(\R) \times  \mathcal{H}^2(\R^n)\Big)$.	

  Let $\Big(\mathcal X(s), \mathcal Y(s),  \mathcal Z(s)\Big)_{0\leq s \leq T} $ be its limit. It remain to prove that   $\Big(\mathcal X(s), \mathcal Y(s),  \mathcal Z(s)\Big)_{0\leq s \leq T}$ is a unique solution of \eqref{eq2} such that $ \mathcal Z= 0$.  

Since $\mathcal X$ is a limit of $X^{\epsilon}$ in $\mathcal{S}^2(\R)$, there exists a subsequence $X^{\epsilon_n}$ converging almost surely to $\mathcal X$. Moreover as $b$ is continuous, we obtain by passing in the limit of the forward component of \eqref{Eq1} that $\mathcal X$ satisfies this equation.  
	  \begin{eqnarray*}
	  	\mathcal X(s)= x + \int_0^s b(r, \mathcal X_r) dr.
	  \end{eqnarray*} 
  On other hand, recalling that $ (\mathcal Y, \mathcal Z)$ is a limit of $(Y^{\epsilon},  Z^{\epsilon})$ in $\Big(\mathcal{S}^2(\R) \times  \mathcal{H}^2(\R^n)\Big)$,  there also exists a subsequence $ (Y^{\epsilon_n},  Z^{\epsilon_n})$ converging almost surely to $ (\mathcal Y, \mathcal Z)$ and $$\int_s^T Z^{\epsilon_n}(r) dW(r) \rightarrow 0,  \ \ \P-a.s.$$
  Therefore $\mathcal Z= 0$ and   
   passing to the limit in the backward component of \eqref{Eq1}, it follows  from the continuity of $g$ and $f$  
    that $\mathcal Y$ satisfies equation 
  \begin{eqnarray*}
  	\mathcal Y(s)= g(\mathcal X_T) + \int_s^T f(r, \mathcal X_r, \mathcal Y_r, 0 ) dr.
  \end{eqnarray*} 

\end{proof}
	\subsection{Large deviation principle}
	Let $ \big(X^{\epsilon}, Y^{\epsilon}, Z^{\epsilon}\big)_{ \epsilon \in (0,1] }  $ be the solution of the following equation:
	\begin{eqnarray}
		&& 	X^{\epsilon}(s)=x+\int^s_0 b(r,X_r^{\epsilon})dr+\sqrt{\epsilon}\int^s_0 \sigma(r,X_r^{\epsilon})dW(r)\label{sde}\\
		 &&			Y^{\epsilon}(s)=g(X_T^{\epsilon})+\int^T_s f(r,X_r^{\epsilon},Y_r^{\epsilon},Z^{\epsilon}_r )dr- \int_s^T Z^{\epsilon}(r)dW(r)\label{bsde}.
	\end{eqnarray}
	In this subsection, we study the Freidlin-Wentzell's large deviation principle for the laws of the family of processes $\big(X^{\epsilon}, Y^{\epsilon}\big)_{ \epsilon \in (0,1] }$  
		as $\epsilon \rightarrow 0$.

	In this fact, let us recall the following definitions.
\begin{defi}
	If $E$ is a complete separable metric space, then a function $I$ defined on $E$ is called a rate function if it has the following properties:
		\begin{itemize}
		\item[(i)]\ \ $I: E \rightarrow [0, + \infty], \ \ I $ is lower semicontinuous 
		\item[(ii)] If  $\; 0 \leq a \leq +\infty $ then $ C_y(a)= \{y \in E, \ \ I(y) \leq a\}$ is compact.
	\end{itemize}
\end{defi}
\begin{defi}
	 If $E$ is a complete separable metric space, $\mathcal{B}$ is the Borel $\sigma-$field on $E$, $\{\mu_{\epsilon}: \ \ \epsilon >0\},$ is a family of probability measures on $(E,\mathcal{B})$, and $I$ is a function defined on $E$ and satisfying  $(i)$-$(ii)$, then we say that $  (\mu_{\epsilon})_{\epsilon >0}$ satisfies a large deviation principle with rate $I$ if:
	 \begin{itemize}
	 \item[(a)] For every open subset $ A$ of $ E, \; \displaystyle \lim_{\epsilon \rightarrow 0}\inf\epsilon \log \mu_{\epsilon}  (A)\geq - \inf_{g \in A} I(g)$
	 \item[(b)] For every closed subset $ A$ of $ E, \; \displaystyle \lim_{\epsilon \rightarrow 0}\sup\epsilon \log \mu_{\epsilon}  (A)\leq - \inf_{g \in A} I(g)$.
	 \end{itemize}
 Here the infimum over the empty set is defined to be $+ \infty.$
\end{defi}
Firstly, let us recall the Large Deviation Principle due to Manga and Aman in \cite{Manga-AMAN}, for  forward SDE \eqref{sde}.

\begin{propo} Assume \textbf{(A1)} hold. Then
the family of processes $(X^{\epsilon})_{ 0 < \epsilon  \leq 1	} $   solution of \eqref{sde} satisfies, as $\epsilon$ tends to $0$,  the Laplace
principle (equivalently, the Large Deviation Principle) in $\mathcal{C}_x([-T, T], \R)$ with a rate function $I_1$ defined by 
\begin{eqnarray}
I_1(\phi)=\left\{ \begin{array}{ll}
	\inf\{ \frac{1}{2}\int_0^T\| \psi(t) \|^2 dt: \phi'(s)= b(s, \phi (s))+ \sigma (s, \phi(s)) \psi (s) \},  & \psi \in L^2([0, T], \R) \\\\ 
 	0 & \ \ \text{otherwise},
\end{array}	\right.   
\end{eqnarray}
where $\mathcal{C}_x([-T, T], \R)$ design the space of continuous functions $\phi$ from $[-T, T]$ to $\R$ such that $\phi(t)= x$ for all $t\in [-T, 0]$. 
\end{propo}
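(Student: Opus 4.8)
The statement to prove is Proposition (the Freidlin--Wentzell large deviation principle for $(X^{\epsilon})_{0<\epsilon\le 1}$ solving the delayed forward SDE \eqref{sde}), which is attributed to Manga and Aman \cite{Manga-AMAN}. Since the result is quoted from that reference, the natural route is to reproduce its proof skeleton rather than reinvent it. The plan is to establish the Laplace principle, which, by the Varadhan--Bryc equivalence on the Polish space $\mathcal{C}_x([-T,T],\R)$, is equivalent to the large deviation principle with the good rate function $I_1$. I would use the weak-convergence (Bou\'e--Dupuis / Budhiraja--Dupuis) approach: represent $-\epsilon\log\E\big[e^{-h(X^{\epsilon})/\epsilon}\big]$ via the variational formula over controls $v\in L^2([0,T],\R)$, replace $W$ by $W+\tfrac{1}{\sqrt\epsilon}\int_0^\cdot v$, and study the controlled delayed SDE $\bar X^{\epsilon}(s)=x+\int_0^s b(r,\bar X^{\epsilon}_r)\,dr+\int_0^s\sigma(r,\bar X^{\epsilon}_r)v(r)\,dr+\sqrt\epsilon\int_0^s\sigma(r,\bar X^{\epsilon}_r)\,dW(r)$.

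The key steps, in order, are: (1) verify well-posedness and uniform (in $\epsilon$, and in controls with bounded energy $\int_0^T|v|^2\le N$) moment estimates for the controlled delayed equation, using the Lipschitz assumption \textbf{(A1)}(i) together with the probability measure $\alpha$ to absorb the delay term exactly as in the proof of Proposition 2.1 and Lemma \ref{l1}; (2) prove the two halves of the Laplace principle. For the upper bound, take near-optimal controls $v^\epsilon$ with $\sup_\epsilon\int_0^T|v^\epsilon|^2<\infty$, extract a weakly convergent subsequence $v^\epsilon\rightharpoonup v$ in $L^2$, and show $\bar X^{\epsilon}\to\phi$ where $\phi'(s)=b(s,\phi(s))+\sigma(s,\phi(s))v(s)$, $\phi|_{[-T,0]}\equiv x$; the vanishing of the stochastic integral is handled by the uniform moment bound and BDG, while the drift passage to the limit combines strong convergence of $\bar X^{\epsilon}$ with weak convergence of $v^\epsilon$ (the product term $\sigma(r,\bar X^{\epsilon}_r)v^\epsilon(r)$ is the delicate one). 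For the lower bound, given any target $\phi$ with $I_1(\phi)<\infty$, pick a control $v$ nearly achieving the infimum in the definition of $I_1$, use it as a (deterministic) control, and show the controlled solution converges to $\phi$. (3) Finally, record that $I_1$ is a good rate function: lower semicontinuity and compactness of level sets follow from weak lower semicontinuity of $v\mapsto\tfrac12\int_0^T|v|^2$ along with the continuity of the solution map $v\mapsto\phi$ on bounded balls of $L^2$ equipped with the weak topology, plus the Arzel\`a--Ascoli-type equicontinuity coming from the uniform bound on $\phi'$.

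The main obstacle I expect is the limit passage in the controlled drift in the presence of the \emph{delay}: one must pass to the limit in $\int_0^s\sigma(r,\bar X^{\epsilon}_r)v^\epsilon(r)\,dr$ where $\bar X^{\epsilon}_r=(\bar X^{\epsilon}(r+u))_{-T\le u\le0}$ is a path segment and $v^\epsilon\rightharpoonup v$ only weakly. The device is to first upgrade $\bar X^{\epsilon}\to\phi$ to uniform convergence on $[-T,T]$ (hence $\sigma(r,\bar X^{\epsilon}_r)\to\sigma(r,\phi_r)$ uniformly by \textbf{(A1)}(i) and the fact that $\alpha$ is a probability measure), so that the integrand factors into a strongly convergent part times a weakly convergent part, and the product converges. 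One must also be careful that the delayed structure does not spoil the compactness/tightness argument; here the extension convention $X^{\epsilon}\equiv x$ on $[-T,0]$ and assumption \textbf{(A1)}(ii) make the delay harmless on the initial segment. Everything else is a routine adaptation of the standard Budhiraja--Dupuis machinery combined with the Gronwall-type estimates already used in the proof of Proposition 2.1; I would simply cite \cite{Manga-AMAN} for the full details.
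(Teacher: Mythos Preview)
The paper does not give its own proof of this proposition at all: it is stated as a recalled result, introduced by the sentence ``Firstly, let us recall the Large Deviation Principle due to Manga and Aman in \cite{Manga-AMAN}'' and then simply quoted without argument. So there is nothing in the paper to compare your attempt against; any correct proof sketch is automatically ``more'' than what the paper does.

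That said, your sketch is a sensible and essentially standard route. The Bou\'e--Dupuis / Budhiraja--Dupuis variational representation combined with weak convergence of controls is exactly the machinery used in \cite{Manga-AMAN} for the delayed forward SDE, and you have correctly isolated the only genuinely new difficulty relative to the non-delayed case: passing to the limit in $\int_0^s \sigma(r,\bar X^{\epsilon}_r)\,v^{\epsilon}(r)\,dr$ when $v^{\epsilon}\rightharpoonup v$ only weakly in $L^2$ and $\sigma$ depends on the whole past segment. Your proposed fix---first upgrade $\bar X^{\epsilon}\to\phi$ to uniform convergence on $[-T,T]$ using the Gronwall-type estimates of Proposition~2.1 and Lemma~\ref{l1}, then use \textbf{(A1)}(i) and the fact that $\alpha$ is a probability measure to get $\sigma(r,\bar X^{\epsilon}_r)\to\sigma(r,\phi_r)$ strongly, so that strong $\times$ weak $\to$ weak---is the right one. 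The goodness of $I_1$ follows as you indicate. In short: your proposal is correct, and since the paper offers only a citation, your write-up already exceeds what the paper itself contains; it would be appropriate simply to cite \cite{Manga-AMAN} as you suggest at the end.
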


To establish the large deviation property for the family of the processes $(Y^{\epsilon})_{ 0 < \epsilon  \leq 1	} $ we need this importent result, due to Dembo and Zeitouni in \cite{DZ},  which allow to transfer LPD from some topology space to an other one.
\begin{lem}\label{dem}
Let $\{\mu_{\epsilon}\}$ be a family of probability measures that satisfies Large Deviation Principle with a good rate function $I$ on a Hausdorff topological space $\mathcal S$, and for $ 0< \epsilon  \leq 1$, let $F_{\epsilon}: \mathcal S\rightarrow \mathcal Q$  be a continuous function, with $(\mathcal Q, d)$ a metric space. Assume that there exists a measurable map $F: \mathcal S \rightarrow \mathcal Q$ such that for every $\alpha < + \infty,$ \[ \limsup_{\epsilon \rightarrow 0} \sup_{\{x: I(x) \leq \alpha\}} d(F_{\epsilon}(x), F(x))=0.\] Then the family of the probability measures $\{\mu_{\epsilon} \circ F_{\epsilon}^{-1}\}$ satisfy the LDP in $\mathcal Q$ with the good rate function 
\[I'(y)= \inf \{ I(x): y= F(x)\}.\] 	
\end{lem}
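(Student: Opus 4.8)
The plan is to verify, for the pushed-forward family $\nu_\epsilon:=\mu_\epsilon\circ F_\epsilon^{-1}$, the three standard ingredients of a large deviation principle on the metric space $(\mathcal Q,d)$: that $I'$ is a good rate function, the upper bound on closed sets, and the lower bound on open sets. Throughout I write $K_\alpha:=\{x\in\mathcal S: I(x)\le\alpha\}$, which is compact because $I$ is good, and I first record the basic consequence of the hypothesis: along $\epsilon\to0$ one may extract a sequence $\epsilon_n\downarrow0$ with $\sup_{x\in K_\alpha}d(F_{\epsilon_n}(x),F(x))\to0$, so that $F|_{K_\alpha}$, being a uniform limit of the continuous maps $F_{\epsilon_n}|_{K_\alpha}$, is continuous; in particular $F(K_\alpha)$ is compact for every $\alpha<+\infty$.

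To see that $I'$ is a good rate function it suffices, since $I'$ is $[0,+\infty]$-valued, to show that each level set $\{y:I'(y)\le\alpha\}$ is compact. I would prove the identity $\{I'\le\alpha\}=\bigcap_{n\ge1}F(K_{\alpha+1/n})$: the inclusion ``$\subseteq$'' is immediate from $I'$ being an infimum, and for ``$\supseteq$'' one takes $y$ in the intersection, picks preimages $x_n\in K_{\alpha+1/n}$ with $F(x_n)=y$, extracts a convergent subsequence $x_{n_k}\to x^*$ inside the compact set $K_{\alpha+1}$, and uses continuity of $F|_{K_{\alpha+1}}$ and lower semicontinuity of $I$ to get $F(x^*)=y$ and $I(x^*)\le\alpha$, hence $I'(y)\le\alpha$. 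Thus $\{I'\le\alpha\}$ is a nested intersection of the compact (hence closed) sets $F(K_{\alpha+1/n})$, so it is compact; closedness of its level sets also yields lower semicontinuity of $I'$.

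For the upper bound, let $C\subseteq\mathcal Q$ be closed, put $m:=\inf_{y\in C}I'(y)$, and fix $\alpha<+\infty$ and $\rho>0$. For $\epsilon$ small, $\sup_{x\in K_\alpha}d(F_\epsilon(x),F(x))\le\rho$, so every $x\in K_\alpha$ with $F_\epsilon(x)\in C$ satisfies $d(F(x),C)\le\rho$; hence $F_\epsilon^{-1}(C)\cap K_\alpha\subseteq G_\rho^\alpha:=\{x\in K_\alpha: d(F(x),C)\le\rho\}$, which is closed in $\mathcal S$ since $F|_{K_\alpha}$ is continuous and $K_\alpha$ compact. Writing $\nu_\epsilon(C)\le\mu_\epsilon(G_\rho^\alpha)+\mu_\epsilon(\mathcal S\setminus K_\alpha)$, applying the $\mu_\epsilon$-upper bound to the closed sets $G_\rho^\alpha$ and $\{I\ge\alpha\}\supseteq\mathcal S\setminus K_\alpha$, and using the principle of the largest term, one obtains $\limsup_\epsilon\epsilon\log\nu_\epsilon(C)\le-\min\big(\inf_{G_\rho^\alpha}I,\;\alpha\big)$. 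Letting $\rho\downarrow0$ one shows (by extracting convergent subsequences of minimizers of $I$ over the nested compacts $G_{\rho_n}^\alpha$ and using continuity of $F|_{K_\alpha}$ and lower semicontinuity of $I$) that $\inf_{G_\rho^\alpha}I\uparrow\inf\{I(x):x\in K_\alpha,\,F(x)\in C\}\ge m$; then letting $\alpha\to+\infty$ gives $\limsup_\epsilon\epsilon\log\nu_\epsilon(C)\le-m$.

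For the lower bound, let $O\subseteq\mathcal Q$ be open; since $F_\epsilon$ is continuous, $F_\epsilon^{-1}(O)$ is open, so the $\mu_\epsilon$-lower bound gives $\liminf_\epsilon\epsilon\log\nu_\epsilon(O)\ge-\limsup_\epsilon\inf_{x\in F_\epsilon^{-1}(O)}I(x)$, and it suffices to bound this last infimum. Fix $y\in O$ with $I'(y)<+\infty$ and $\eta>0$; choose $x$ with $F(x)=y$ and $I(x)<I'(y)+\eta=:\alpha$, and $\rho>0$ with the ball $B(y,\rho)\subseteq O$. Since $x\in K_\alpha$, the uniform convergence forces $d(F_\epsilon(x),y)<\rho$ for small $\epsilon$, i.e. $x\in F_\epsilon^{-1}(O)$, whence $\inf_{x'\in F_\epsilon^{-1}(O)}I(x')\le I(x)<I'(y)+\eta$ for all such $\epsilon$; letting $\eta\downarrow0$ and taking the infimum over $y\in O$ yields $\limsup_\epsilon\inf_{F_\epsilon^{-1}(O)}I\le\inf_{y\in O}I'(y)$, which is exactly what is needed. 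I expect the upper bound to be the main obstacle, precisely because $F$ is only known to be continuous on the compact level sets of $I$: one must systematically cut off at $K_\alpha$, control the escape probability $\mu_\epsilon(\mathcal S\setminus K_\alpha)$ (which decays like $e^{-\alpha/\epsilon}$), and pass carefully to the limit $\rho\downarrow0$; the lower bound, by contrast, reduces to the pointwise statement $F_\epsilon(x)\to F(x)$ at a single near-optimal point $x$.
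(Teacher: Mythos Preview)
The paper does not prove this lemma at all: it is stated as a quotation of the extended contraction principle from Dembo and Zeitouni \cite{DZ} and used as a black box in the proof of Theorem~\ref{pdg}. So there is no ``paper's own proof'' to compare against; what you have written is a self-contained proof of the cited result, which the authors simply import.

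Your argument follows the standard route (goodness of $I'$ via $F|_{K_\alpha}$ being a uniform limit of continuous maps, upper bound by truncation at level sets $K_\alpha$, lower bound by pointwise approximation at a near-optimal preimage) and is essentially correct. One technical slip: in the upper bound you invoke the $\mu_\epsilon$-LDP on ``the closed set $\{I\ge\alpha\}$'', but lower semicontinuity of $I$ only guarantees that $\{I\le\alpha\}$ is closed, not $\{I\ge\alpha\}$; equivalently, $\mathcal S\setminus K_\alpha=\{I>\alpha\}$ is open, and its closure can have $I$-infimum strictly below $\alpha$ in a general Hausdorff space. In the paper's actual application $\mathcal S=\mathcal C([-T,T],\mathbb R)$ is Polish, where a good LDP automatically yields exponential tightness and your splitting $\nu_\epsilon(C)\le\mu_\epsilon(G_\rho^\alpha)+\mu_\epsilon(K_\alpha^c)$ goes through cleanly; in the abstract Hausdorff setting one instead argues directly that $\inf_{F_\epsilon^{-1}(C)}I\ge\min\bigl(\alpha,\inf_{G_\rho^\alpha}I\bigr)$ for small $\epsilon$ (since any $x\in F_\epsilon^{-1}(C)$ with $I(x)\le\alpha$ lies in $G_\rho^\alpha$) and combines this with the upper bound on the fixed closed set $G_\rho^\alpha$, which avoids bounding $\mu_\epsilon(K_\alpha^c)$ altogether. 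With that adjustment your proof is complete and in fact supplies what the paper only cites.
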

\begin{theo}\label{pdg}
Assume \textbf{(A1)}-\textbf{(A3)} hold. Then, 
the family $(Y^{\epsilon})_{ 0 < \epsilon  \leq 1	} $ satisfies, as $\epsilon$ tends to $0$, a large deviation principle with a rate function 
\begin{eqnarray*}
	 I_2 (\phi)= \inf\{ I_1(\varphi): 
	 F(\varphi)(s)= \phi(s)= Y^{\varphi}(s) \},
\end{eqnarray*} 
where 	$F: \mathcal{C}([-T, T], \R) \rightarrow \mathcal{C}([-T, T], \R)$ defined by $F(\psi)(s)= Y^{\psi}(s)$ with $Y^{\psi} $ satisfies
\[ Y^{\psi}(s)= g(\psi_T) + \int_s^T f(r, \psi_r, Y^{\psi}_r, 0)dr. \]
\end{theo}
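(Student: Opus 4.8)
The plan is to apply Lemma \ref{dem} (the transfer principle of Dembo and Zeitouni) with the source space $\mathcal{S} = \mathcal{C}_x([-T,T],\R)$ carrying the family of laws of $(X^{\epsilon})_{0<\epsilon\leq 1}$, for which Proposition 3.1 already provides a large deviation principle with good rate function $I_1$. The target space is $\mathcal{Q} = \mathcal{C}([-T,T],\R)$ with the uniform metric. I would take $F_{\epsilon}$ to be the (measurable, in fact continuous) map sending a path $X^{\epsilon}$ to the corresponding $Y^{\epsilon}$-component produced by the backward equation \eqref{bsde}, and $F$ the deterministic solution map $\psi \mapsto Y^{\psi}$ defined in the statement. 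Once the hypothesis of Lemma \ref{dem} is verified, the conclusion gives precisely the LDP for $(Y^{\epsilon})$ with rate function $I_2(\phi) = \inf\{I_1(\varphi): F(\varphi) = \phi\}$.

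The first step is to make sense of $F_{\epsilon}$ and $F$ as genuine maps between the stated spaces and to check the required continuity/measurability. For $F$, given a continuous path $\psi$ with $\psi_t$ its segment process, existence and uniqueness of $Y^{\psi}$ solving $Y^{\psi}(s) = g(\psi_T) + \int_s^T f(r,\psi_r,Y^{\psi}_r,0)\,dr$ follows from assumptions \textbf{(A2)}--\textbf{(A3)} exactly as in Proposition 2.2 / Proposition 2.4 (the contraction $8Ke\max(1,T)<1$, or a Gronwall argument on $[-T,0]$-segments); continuity of $\psi \mapsto Y^{\psi}$ in the uniform norm follows from the Lipschitz estimates in \textbf{(A2)}--\textbf{(A3)} plus Gronwall. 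For $F_{\epsilon}$, one must argue that the backward SDE \eqref{bsde}, solved pathwise given $X^{\epsilon}$, depends continuously (a.s., or in the appropriate sense) on the driving path; this is where one invokes the well-posedness of the delayed BSDE from Delong--Imkeller \cite{DI1} together with the a priori bounds of Lemma \ref{l0}.

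The heart of the proof — and the main obstacle — is the uniform convergence condition
\[
\limsup_{\epsilon\to 0}\ \sup_{\{\varphi:\ I_1(\varphi)\leq \alpha\}} \ \|F_{\epsilon}(\varphi) - F(\varphi)\|_{\infty} = 0
\]
for each fixed level $\alpha < +\infty$. On the sublevel set $\{I_1 \leq \alpha\}$, which is compact by the goodness of $I_1$, the controlled forward paths $\varphi$ are equibounded and equicontinuous, so the segment processes $\varphi_r$ stay in a fixed bounded set of $L^2_{-T}(\R)$. One then writes the difference $F_{\epsilon}(\varphi)(s) - F(\varphi)(s)$, applies Itô's formula to $e^{\beta s}|F_{\epsilon}(\varphi)(s)-F(\varphi)(s)|^2$, uses the Lipschitz bound \textbf{(A3)}(i) on $f$ and \textbf{(A2)} on $g$ together with the basic inequality $ab \leq \tfrac{1}{2\gamma}a^2 + \tfrac{\gamma}{2}b^2$ to absorb the delay terms (choosing $\beta,\gamma$ so that $8Ke\max(1,T)<1$ forces a strict contraction, as in Delong--Imkeller), and is left with a term controlled by $\|X^{\epsilon}(\cdot) - \varphi(\cdot)\|$ and by the stochastic integral $\sqrt{\epsilon}\int Z^{\epsilon}\,dW$. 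The delicate point is that the estimate must be uniform over $\varphi$ in the sublevel set; this is handled because all constants depend only on $K$, $T$, $\beta$ and on the (uniform) bound on $\|\varphi\|_{\infty}$ over $\{I_1\leq\alpha\}$, and because $\sup_{0\leq s\leq T}|X^{\epsilon}(s) - \varphi(s)| \to 0$ uniformly on the sublevel set by the LDP estimate of Theorem 3.1(i) combined with the large-deviation upper bound (the exponential tightness of $X^{\epsilon}$ around controlled paths). Once this $\limsup$ is shown to vanish, Lemma \ref{dem} applies verbatim and yields the asserted large deviation principle for $(Y^{\epsilon})$ with rate function $I_2$.

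Finally I would note that the contraction property $F(\varphi)=\phi$ together with the form of $I_1$ (it is $0$ only at $\varphi = \mathcal{X}$, cf. Remark 2.2 and Theorem 3.2) shows that $I_2(\phi) = 0$ exactly when $\phi = \mathcal{Y}$, consistent with the almost-sure convergence established in Theorem 3.2; this serves as a sanity check on the rate function and closes the proof.
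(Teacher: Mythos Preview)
Your proposal is correct and takes essentially the same approach as the paper: both apply the Dembo--Zeitouni transfer lemma (Lemma~\ref{dem}) to the LDP already established for $(X^{\epsilon})$, with $F_{\epsilon}$ the solution map of the backward equation and $F$ its deterministic limit. The only difference is cosmetic: for the uniform convergence on level sets the paper simply invokes Lemma~\ref{l1} to get $\sup_{\psi\in\mathcal K}|F_{\epsilon}(\psi)-F(\psi)|^{2}\leq C\epsilon$, whereas you redo an It\^o-type estimate from scratch; your version is more detailed but not needed given that Lemma~\ref{l1} already provides the bound.
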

\begin{proof}
Let consider $F_{\epsilon}: \mathcal{C}([-T, T], \R) \rightarrow \mathcal{C}([-T, T], \R)$ defined by: for all $ \psi= X^{\epsilon}(\omega),$ $  F_{\epsilon}(\psi)= Y^{\epsilon}(\omega)$ where $ Y^{\epsilon}$ satisfies \eqref{bsde} . According to   Lemma \ref{dem}, it suffices to prove that   $F_{\epsilon}$ is continuous and converge uniformly to $F$.  
The continuity follows in view of this inequality
\begin{eqnarray*}
	 \sup_{s \in [0, T]} | F_{\epsilon}(\varphi)(s) - F_{\epsilon} (\varphi')(s) | = \sup_{s \in [0, T]} | Y^{\epsilon}(\omega_1)(s) - Y^{\epsilon} (\omega_2)(s) |\leq \sup_{s \in [0, T]} |\varphi (s) - \varphi'(s)|.
\end{eqnarray*}
Let now show the uniform convergence of the mapping $F^{\epsilon}$. Consider  $\mathcal{K}$ a compact set of $\mathcal C ([-T, T], \R)$. It follows from defintion of $F^{\epsilon}$ and $F$, and Lemma \ref{l1} that:
\begin{eqnarray*}
	\sup_{\psi \in \mathcal K}| F_{\epsilon}( \psi)- F(\psi)|^2 &= &\sup_{\psi \in \mathcal K} \sup_{s \in [0, T]}| F_{\epsilon}( \psi)(s)- F(\psi)(s)|^2 \\ & \leq &\sup_{\psi \in \mathcal K} \sup_{s \in [0, T]}| Y^{\epsilon}( \omega_)(s)- Y(\omega)(s)|^2\\ &\leq & C \epsilon . 
\end{eqnarray*}
Letting $\epsilon$ tends to $0$ we get the uniform convergence.
\end{proof}
\begin{rem}
	This works is devoted 
\end{rem}

	\bigskip
	
	{\bf Acknowledgment:}
	This work was carried out in two stages. First, it began during the stay of the first author (Clément Manga) at "UFR Mathématique et Informatique" of Félix H. Boigny University, Cocody (Côte d'Ivoire). Then it ended during the stay of the second author (Auguste Aman) at the Mathematics Laboratory of Assane Seck University, Ziguinchor (Senegal). These two authors would like to thank the various administrations.

	\bigskip

\end{document}